\documentclass[11pt,leqno]{article}
\usepackage{hyperref}
\usepackage{soul}
\usepackage{pdfsync}
\usepackage[doc]{optional}
\usepackage{xcolor}
\definecolor{labelkey}{rgb}{0,0.08,0.45}
\definecolor{rekey}{rgb}{0,0.6,0.0}
\definecolor{Brown}{rgb}{0.45,0.0,0.05}
\usepackage{exscale,relsize}
\usepackage{amsmath}
\usepackage{amsfonts}
\usepackage{amssymb}
\usepackage{calc}
\usepackage{theorem}
\usepackage{pifont}      
\usepackage{graphicx}
\usepackage[toc,page]{appendix}
\usepackage{appendix}
\oddsidemargin -0.1cm
\textwidth  16.5cm
\topmargin  0.0cm
\headheight 0.0cm
\textheight 21.0cm
\parindent  4mm
\parskip    10pt
\tolerance  3000
\DeclareMathOperator{\weakstarly}{\rightharpoondown_{\mathrm{w*}}}

\newcommand{\scal}[2]{\langle{{#1},{#2}}\rangle}

\newcommand{\RR}{\ensuremath{\mathbb R}}

\newcommand{\RX}{\ensuremath{\,\left]-\infty,+\infty\right]}}

\newcommand{\NN}{\ensuremath{\mathbb N}}

\newcommand{\menge}[2]{\big\{{#1} \mid {#2}\big\}}

\newcommand{\To}{\ensuremath{\rightrightarrows}}

\newcommand{\dom}{\ensuremath{\operatorname{dom}}}

\newcommand{\gra}{\ensuremath{\operatorname{gra}}}

\newcommand{\intdom}{\ensuremath{\operatorname{int}\operatorname{dom}}\,}
\newcommand{\inte}{\ensuremath{\operatorname{int}}}

\newcommand{\conv}{\ensuremath{\operatorname{conv}}}

\renewcommand{\phi}{\ensuremath{\varphi}}

\newtheorem{theorem}{Theorem}[section]
\newtheorem{lemma}[theorem]{Lemma}
\newtheorem{fact}[theorem]{Fact}
\newtheorem{corollary}[theorem]{Corollary}
\newtheorem{proposition}[theorem]{Proposition}

\theoremstyle{plain}{\theorembodyfont{\rmfamily}
}
\theoremstyle{plain}{\theorembodyfont{\rmfamily}
}
\theoremstyle{plain}{\theorembodyfont{\rmfamily}
}
\theoremstyle{plain}{\theorembodyfont{\rmfamily}
}
\theoremstyle{plain}{\theorembodyfont{\rmfamily}
\newtheorem{remark}[theorem]{Remark}}
\newtheorem{problem}[theorem]{Open Problem}
\theoremstyle{plain}{\theorembodyfont{\rmfamily}
}


\begin{document}


\title{\sffamily{
Maximality of the sum of  the subdifferential operator and a maximally monotone operator}}

\author{
Liangjin\ Yao\thanks{Department of Mathematical and Statistical Sciences,
 University of Alberta
Edmonton, Alberta, T6G 2G1 Canada
E-mail:  \texttt{liangjin@ualberta.ca}.}}

\date{June 29,  2014}
\maketitle

\begin{abstract} \noindent
The most important open problem in Monotone Operator Theory
concerns the maximal monotonicity of the sum of two
maximally monotone operators provided that the classical
Rockafellar's constraint qualification holds, which is called the ``sum problem''.

In this paper, we establish the maximal monotonicity of $A+B$
provided that
$A$ and $B$ are maximally monotone operators such that
$\dom A\cap\inte\dom B\neq\varnothing$, and
$A+N_{\overline{\dom A}}$ is of type (FPV). This generalizes various current results and also gives an affirmative answer to a problem posed by Borwein and Yao.
Moreover, we present an equivalent description of the sum problem.
\end{abstract}

\noindent {\bfseries 2010 Mathematics Subject Classification:}\\
{Primary  47H05;
Secondary
49N15, 52A41, 90C25}

\noindent {\bfseries Keywords:}
Constraint qualification,
convex function,
convex set,
Fitzpatrick function,
maximally monotone operator,
monotone operator,
normal cone operator,
operator of type (FPV),
subdifferential
operator,
sum problem.

\section{Introduction}

Throughout this paper, we assume that
$X$ is a real Banach space with norm $\|\cdot\|$,
that $X^*$ is the continuous dual of $X$, and
that $X$ and $X^*$ are paired by $\scal{\cdot}{\cdot}$.
Let $A\colon X\To X^*$
be a \emph{set-valued operator}  (also known as  point-to-set mapping or multifunction)
from $X$ to $X^*$, i.e., for every $x\in X$, $Ax\subseteq X^*$,
and let
$\gra A := \menge{(x,x^*)\in X\times X^*}{x^*\in Ax}$ be
the \emph{graph} of $A$, and  $\dom A:= \menge{x\in X}{Ax\neq\varnothing}$ be the
\emph{domain} of $A$.
Recall that $A$ is  \emph{monotone} if
\begin{equation*}
\scal{x-y}{x^*-y^*}\geq 0,\quad \forall (x,x^*)\in \gra A\;\forall (y,y^*)\in\gra A.
\end{equation*}
We say $A$ is \emph{maximally monotone} if $A$ is monotone and $A$ has no proper monotone extension
(in the sense of graph inclusion).
Let $A:X\rightrightarrows X^*$ be monotone and $(x,x^*)\in X\times X^*$.
 We say $(x,x^*)$ is \emph{monotonically related to}
$\gra A$ if
\begin{align*}
\langle x-y,x^*-y^*\rangle\geq0,\quad \forall (y,y^*)\in\gra A.\end{align*}
Let $A:X\rightrightarrows X^*$ be maximally monotone. We say $A$ is
\emph{of type (FPV)} \cite{FitzPh, VV5} if  for every open convex set $U\subseteq X$ such that
$U\cap \dom A\neq\varnothing$, the implication
\begin{equation*}
x\in U\,\text{and}\,(x,x^*)\,\text{is monotonically related to $\gra A\cap (U\times X^*)$}
\Longrightarrow (x,x^*)\in\gra A
\end{equation*}
holds.

 Monotone operators have proven  important
 in modern Optimization and Analysis; see, e.g., the books
\cite{BC2011,BorVan,BurIus,ButIus,ph,Si,Si2,RockWets,Zalinescu,Zeidler2A,Zeidler2B}
and the references therein. We adopt standard notation used in these
books. Given a subset $C$ of $X$, $\inte C$ is the
\emph{interior} of $C$, $\overline{C}$ is the
\emph{norm closure} of $C$, and $\conv{C}$ is the \emph{convex hull} of $C$.
The \emph{indicator function} of $C$, written as $\iota_C$, is defined
at $x\in X$ by
\begin{align*}
\iota_C (x):=\begin{cases}0,\,&\text{if $x\in C$;}\\
+\infty,\,&\text{otherwise}.\end{cases}\end{align*}

If $C,D\subseteq X$, we set $C-D:=\{x-y\mid x\in C, y\in D\}$.
  For every $x\in X$, the \emph{normal cone} operator of $C$ at $x$
is defined by $N_C(x):= \menge{x^*\in
X^*}{\sup_{c\in C}\scal{c-x}{x^*}\leq 0}$, if $x\in C$; and $N_C(x):=\varnothing$,
if $x\notin C$.
For $x,y\in X$, we set $\left[x,y\right]:=\{tx+(1-t)y\mid 0\leq t\leq 1\}$.

 Given $f\colon X\to \RX$, we set
$\dom f:= f^{-1}(\RR)$.
 We say $f$ is \emph{proper} if $\dom f\neq\varnothing$.
 Let $f$ be proper.
Then
   $\partial f\colon X\To X^*\colon
   x\mapsto \menge{x^*\in X^*}{(\forall y\in
X)\; \scal{y-x}{x^*} + f(x)\leq f(y)}$ is the \emph{subdifferential
operator} of $f$. Thus $N_C=\partial\iota_C$.
 We also set $P_X: X\times X^*\rightarrow X\colon
(x,x^*)\mapsto x$. The \emph{open unit ball} in $X$ is
denoted by $U_X:= \menge{x\in X}{\|x\|< 1}$, the \emph{closed unit
ball} in $X$ is denoted by $B_X:= \menge{x\in X}{\|x\|\leq 1}$, and
$\NN:=\{1,2,3,\ldots\}$. We denote by $\longrightarrow$ and
$\weakstarly$ the norm convergence and weak$^*$ convergence of
nets,  respectively.

Let $A$ and $B$ be maximally monotone operators from $X$ to
$X^*$.
Clearly, the \emph{sum operator} $A+B\colon X\To X^*\colon x\mapsto
Ax+Bx: = \menge{a^*+b^*}{a^*\in Ax\;\text{and}\;b^*\in Bx}$
is monotone.
Rockafellar established the following significant result in 1970.
\begin{theorem}[Rockafellar's sum theorem]
\emph{(See  \cite[Theorem~1]{Rock70} or \cite{BorVan}.)} Suppose
that $X$ is reflexive. Let $A, B: X\rightrightarrows  X^*$ be
maximally monotone. Assume that $A$ and  $B$  satisfy the classical
\emph{constraint qualification}:\[\dom A \cap\intdom B\neq
\varnothing.\] Then $A+B$ is maximally monotone.
\end{theorem}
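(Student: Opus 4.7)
The plan is to exploit the Minty--Rockafellar characterisation that in a reflexive Banach space, a monotone operator $T:X\rightrightarrows X^*$ is maximally monotone if and only if $\ran(T+J)=X^*$, where $J:X\to X^*$ is the normalised duality mapping (single-valued and bicontinuous after an Asplund--Troyanski renorming). Since $A+B$ is already monotone, maximality will follow once I show $\ran(A+B+J)=X^*$; by replacing $A$ with $A-y^*$ it suffices to fix an arbitrary $y^*\in X^*$, set $y^*=0$, and produce $x\in X$ with $0\in A(x)+B(x)+J(x)$.

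Having fixed this goal, I would regularise $B$ by its Yosida approximation $B_\lambda:=(B^{-1}+\lambda J^{-1})^{-1}$, which is single-valued, Lipschitz, maximally monotone and everywhere defined. For each $\lambda>0$ the operator $A+B_\lambda+J$ is then maximally monotone (the easy sum theorem: sum with a continuous, everywhere-defined, monotone operator) and coercive (via the $J$ term), hence surjective by Rockafellar's surjectivity theorem in reflexive spaces. Consequently there exist $x_\lambda\in X$ together with $a_\lambda^*\in A(x_\lambda)$ and $b_\lambda^*:=B_\lambda(x_\lambda)$ satisfying
$$a_\lambda^*+b_\lambda^*+Jx_\lambda=0,$$
and the companion point $z_\lambda:=x_\lambda-\lambda J^{-1}b_\lambda^*$ automatically satisfies $(z_\lambda,b_\lambda^*)\in\gra B$.

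The crux is the passage $\lambda\downarrow 0$. Pairing the identity above against $x_\lambda-c$ for a fixed $c\in\dom A\cap\inte\dom B$, and exploiting the monotonicity of $A$ at some $(c,a_0^*)\in\gra A$ together with the monotonicity of $B$ at $(z_\lambda,b_\lambda^*)$ compared with an \emph{entire} ball of graph points $(c+tu,b_u^*)\in\gra B$ furnished by the interiority assumption, I expect to obtain uniform bounds $\sup_\lambda\|x_\lambda\|,\sup_\lambda\|a_\lambda^*\|,\sup_\lambda\|b_\lambda^*\|<\infty$, together with $\|z_\lambda-x_\lambda\|=\lambda\|J^{-1}b_\lambda^*\|\to 0$. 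Reflexivity then delivers a weakly convergent subnet $(x_\lambda,a_\lambda^*,b_\lambda^*)\weakly(\bar x,\bar a^*,\bar b^*)$ (with $z_\lambda\weakly\bar x$ as well), and the weak-closedness of maximally monotone graphs---which itself follows from lower semicontinuity of the Fitzpatrick functions $F_A,F_B$ and their coincidence with $\pscal$ on the respective graphs---lets me conclude $\bar a^*\in A(\bar x)$, $\bar b^*\in B(\bar x)$ and $\bar a^*+\bar b^*+J\bar x=0$, establishing $0\in\ran(A+B+J)$ and hence $\ran(A+B+J)=X^*$.

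The principal obstacle is the uniform boundedness step. Without the \emph{interior} condition $c\in\inte\dom B$ one cannot test the monotonicity of $B$ against a whole neighbourhood of image points, and the regularised selection $\{b_\lambda^*\}$ could genuinely blow up; this is precisely the structural reason for the classical constraint qualification and, as the introduction indicates, also the obstruction whose removal motivates the (FPV)-based machinery developed in the rest of the paper.
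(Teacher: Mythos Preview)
The paper does not give its own proof of Rockafellar's sum theorem: it is quoted as background with references to \cite{Rock70} and \cite{BorVan}, so there is no in-paper argument to compare your proposal against. Your route---renorm so that $J$ and $J^{-1}$ are single-valued, regularise $B$ via $B_\lambda=(B^{-1}+\lambda J^{-1})^{-1}$, solve $0\in A(x_\lambda)+B_\lambda(x_\lambda)+J(x_\lambda)$, extract uniform bounds from the point $c\in\dom A\cap\inte\dom B$, and pass to a weak limit---is exactly the classical proof in those references, and the boundedness step you sketch is the right one.

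The one point where the write-up overreaches is the limiting step. Graphs of maximally monotone operators are \emph{not} weak$\times$weak closed in general, and the Fitzpatrick argument you invoke only yields $F_A(\bar x,\bar a^*)\le\liminf_\lambda\langle x_\lambda,a_\lambda^*\rangle$; since the duality pairing is not jointly weakly continuous, this does not by itself give $F_A(\bar x,\bar a^*)\le\langle\bar x,\bar a^*\rangle$. What is actually needed is the demiclosedness principle: if $(x_\lambda,a_\lambda^*)\in\gra A$, $x_\lambda\weakly\bar x$, $a_\lambda^*\weakly\bar a^*$ and $\limsup_\lambda\langle a_\lambda^*,x_\lambda-\bar x\rangle\le 0$, then $(\bar x,\bar a^*)\in\gra A$. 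The required $\limsup$ inequalities for $A$ and for $B$ are not obtained from the graphs alone but by pairing the defining equation $a_\lambda^*+b_\lambda^*+Jx_\lambda=0$ with $x_\lambda-\bar x$ and exploiting $\langle b_\lambda^*,x_\lambda-z_\lambda\rangle=\lambda\|b_\lambda^*\|^2\ge 0$ together with the monotonicity of $J$. This is the step that carries the real content of the passage to the limit; once it is supplied, your outline is the textbook proof.
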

The generalization of Rockafellar's sum theorem in the setting of a reflexive space can be found
in \cite{AttRiaThe,Si2, SiZ,BorVan,FABVY}.

The most famous open problem in Monotone Operator Theory concerns  the
maximal monotonicity of the sum of two maximally monotone operators satisfying Rockafellar's constraint qualification
in general Banach spaces; this is called the ``sum problem''. Some
recent developments on the sum problem can be found in  Simons'
monograph \cite{Si2} and \cite{Bor1,Bor2,Bor3,BorVan,BY4FV,BY3, BY2, ZalVoi,
Voi1,
MarSva5,VV2,BWY4,BWY9, Yao3,Yao2,YaoPhD}, and also see \cite{AtBrezis} for the subdifferential operators.

In this paper, we focus on the  case when
 $A, B$ are maximally monotone
 with $\dom A\cap\inte\dom B\neq\varnothing$, and $A+N_{\overline{\dom B}}$ is of type (FPV) (see Theorem~\ref{TePGV:1}).

Corollary~\ref{CorPbA:1} provides an affirmative answer to the following problem
posed by Borwein and Yao in \cite[Open problem~4.5]{BY3}.
\begin{quote}
Let $f:X\rightarrow \RX$ be a proper lower semicontinuous convex function,
and let $B:X\rightrightarrows X^*$ be maximally monotone with $\dom \partial f\cap\inte\dom B\neq\varnothing$.
  Is
$\partial f +B$ necessarily maximally monotone?
\end{quote}

The remainder of this paper is organized as follows. In
Section~\ref{s:aux}, we collect auxiliary results for future
reference and for the reader's convenience. In Section~\ref{s:main}, our main
result (Theorem~\ref{TePGV:1}) is presented.  We also show that Problem~\ref{OPRKM:1} is equivalent to the sum problem.

\section{Auxiliary Results}
\label{s:aux}

We first introduce the well known
  Banach-Alaoglu Theorem and  the two of Rockafellar's results.
\begin{fact}[The Banach-Alaoglu Theorem]\label{BaAlo}
\emph{(See \cite[Theorem~3.15]{Rudin} or
\cite[Theorem~2.6.18]{Megg}.)}
The closed unit ball in $X^*$, $B_{X^*}$, is weakly$^*$ compact.
\end{fact}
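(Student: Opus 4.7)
The plan is to deduce weak$^*$ compactness of $B_{X^*}$ from Tychonoff's theorem by viewing each functional as a point in a large product space of compact discs. For every $x\in X$, let $D_x:=\{\lambda\in\mathbb{K}\mid |\lambda|\leq\|x\|\}$, which is a compact subset of the scalar field. By Tychonoff's theorem, $P:=\prod_{x\in X}D_x$ is compact in the product topology. Define $\Phi:B_{X^*}\to P$ by $\Phi(x^*):=\bigl(\scal{x}{x^*}\bigr)_{x\in X}$; this is well defined since $|\scal{x}{x^*}|\leq\|x\|$ whenever $\|x^*\|\leq 1$, and it is clearly injective.

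Next I would verify that $\Phi$ is a homeomorphism onto its image when $B_{X^*}$ carries the weak$^*$ topology and $P$ the product topology. A sub-basis for the weak$^*$ topology on $B_{X^*}$ consists of sets of the form $\{x^*\mid \scal{x}{x^*}\in V\}$ for $x\in X$ and $V\subseteq\mathbb{K}$ open, which is exactly the preimage under $\Phi$ of a sub-basic open set in $P$. So $\Phi$ is a weak$^*$-to-product-topology homeomorphism onto $\Phi(B_{X^*})$, and it suffices to show that $\Phi(B_{X^*})$ is closed in $P$; then it will be compact as a closed subset of the compact space $P$, and $B_{X^*}$ will inherit compactness through $\Phi^{-1}$.

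The main obstacle is this closedness step: I have to check that a pointwise limit of bounded linear functionals is still linear, and the only structure I am given is pointwise convergence of nets in the product. Let $(x_\alpha^*)$ be a net in $B_{X^*}$ with $\Phi(x_\alpha^*)\to f$ in $P$. For any $x,y\in X$ and $\lambda,\mu\in\mathbb{K}$, the evaluation coordinates $\pi_x,\pi_y,\pi_{\lambda x+\mu y}$ are continuous on $P$, so
\begin{equation*}
f(\lambda x+\mu y)=\lim_\alpha \scal{\lambda x+\mu y}{x_\alpha^*}=\lim_\alpha\bigl(\lambda\scal{x}{x_\alpha^*}+\mu\scal{y}{x_\alpha^*}\bigr)=\lambda f(x)+\mu f(y),
\end{equation*}
which gives linearity. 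Boundedness by $1$ is automatic because $|f(x)|\leq\|x\|$ for every $x$ (as $f(x)\in D_x$). Hence $f$ corresponds to an element $x^*\in B_{X^*}$, so $f=\Phi(x^*)\in \Phi(B_{X^*})$. This closes the argument and yields weak$^*$ compactness of $B_{X^*}$.
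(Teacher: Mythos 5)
Your proof is correct and is the standard Tychonoff-embedding argument; the paper itself offers no proof of this Fact, simply citing \cite[Theorem~3.15]{Rudin} and \cite[Theorem~2.6.18]{Megg}, and your argument is essentially the one given in those references. All the key steps are in place: injectivity of $\Phi$, the identification of the weak$^*$ topology on $B_{X^*}$ with the product (pointwise-convergence) topology on the image, and the closedness of $\Phi(B_{X^*})$ in $\prod_{x\in X}D_x$ via linearity and the bound $|f(x)|\leq\|x\|$ for pointwise limits along nets.
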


\begin{fact}[Rockafellar] \label{f:F4}
\emph{(See {\cite[Theorem~3]{Rock66}},
{\cite[Theorem~18.1]{Si2}}, or
{\cite[Theorem~2.8.7(iii)]{Zalinescu}}.)}
Let $f,g: X\rightarrow\RX$ be proper convex functions.
Assume that there exists a point $x_0\in\dom f \cap \dom g$
such that $g$ is continuous at $x_0$.
Then  $\partial (f+g)=\partial f+\partial g$.
\end{fact}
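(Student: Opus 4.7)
The plan is to prove the nontrivial inclusion $\partial(f+g) \subseteq \partial f + \partial g$ via the Hahn--Banach theorem in its convex--concave sandwich form; the reverse inclusion $\partial f + \partial g \subseteq \partial(f+g)$ is immediate from the definition. Fix $x \in \dom(f+g)$ and $x^* \in \partial(f+g)(x)$, and aim to produce $u^* \in \partial f(x)$ and $v^* \in \partial g(x)$ with $x^* = u^* + v^*$.

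First I would rewrite the subgradient inequality $f(y) + g(y) \geq f(x) + g(x) + \scal{y-x}{x^*}$ in a separated form $p \geq q$, where $p(y) := f(y) - f(x)$ is convex and $q(y) := \scal{y-x}{x^*} + g(x) - g(y)$ is concave. Note that $p(x) = q(x) = 0$, and since $g$ is continuous at $x_0 \in \dom f \cap \dom g$, so is $q$, with $q(x_0) \in \RR$.

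The core step is to separate $\epi p$ from the strict hypograph $B := \{(y,t) \in X \times \RR : t < q(y)\}$. The set $B$ is convex, nonempty, and open thanks to the continuity of $q$ at $x_0$; the two sets are disjoint because $p \geq q$. The Hahn--Banach theorem then delivers a closed separating hyperplane, and the interior point in $B$ forces the linear coefficient in the $\RR$-direction to be nonzero. Normalizing, one obtains an affine function $\ell$ with $p \geq \ell \geq q$ on $X$. Evaluating at $x$ squeezes $\ell(x) = 0$, whence $\ell(y) = \scal{y-x}{u^*}$ for some $u^* \in X^*$. Then $p \geq \ell$ reads $u^* \in \partial f(x)$, while $\ell \geq q$ rearranges to $g(y) \geq g(x) + \scal{y-x}{x^*-u^*}$, so $v^* := x^* - u^* \in \partial g(x)$, and $x^* = u^* + v^*$ as required.

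The main obstacle is ruling out a vertical separating hyperplane, which would yield only a statement about the projection onto $X$ and not a usable affine bound. This is exactly where the continuity hypothesis at $x_0$ is essential: without an interior point for the hypograph of $q$, the separation could degenerate and no bounding affine function would be produced. Everything else in the argument is direct manipulation of the subgradient inequalities.
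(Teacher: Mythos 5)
The paper offers no proof of this Fact; it is quoted from the literature (Rockafellar, Simons, Z\u{a}linescu), and your argument is essentially the standard separation proof given in those references, so the approach is the right one. One point needs repair: the strict hypograph $B=\{(y,t): t<q(y)\}$ is \emph{not} open just because $q$ is continuous at the single point $x_0$ (openness of $B$ would require lower semicontinuity of $q$ everywhere, which a concave function need not have off the interior of its domain). What continuity of $g$ at $x_0$ actually gives is that $-q$ is bounded above on a ball $x_0+rU_X$, so that $B$ contains an open set of the form $(x_0+rU_X)\times\left]-\infty,-M\right[$ and hence has nonempty interior; you then invoke the Eidelheit form of Hahn--Banach (two disjoint convex sets, one with nonempty interior, can be separated by a closed hyperplane), which applies since $\epi p\cap B=\varnothing$ implies $\epi p\cap \inte B=\varnothing$. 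With that correction the rest goes through: ruling out the vertical hyperplane uses both the ball $(x_0+rU_X)\times\{t_0\}\subseteq B$ and the fact that $x_0\in\dom f$, so that $(x_0,p(x_0))\in\epi p$ lies on the other side; these two facts together force the horizontal component of a vertical separator to vanish. Your evaluation at $x$ to normalize $\ell$ and the identification $u^*\in\partial f(x)$, $x^*-u^*\in\partial g(x)$ are correct.
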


\begin{fact}[Rockafellar]
\emph{(See \cite[Theorem~1]{Rock69} or
\cite[Theorem~27.1 and Theorem~27.3]{Si2}.)}
\label{f:refer02c}
Let $A:X\To X^*$ be  maximally monotone
 with $\inte\dom A\neq\varnothing$. Then
$\inte\dom A=\inte\overline{\dom A}$
and  $\inte\dom A$ and $\overline{\dom A}$ are both convex.
\end{fact}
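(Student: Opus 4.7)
The plan is to decompose the statement into three assertions---convexity of $\overline{\dom A}$, the equality $\inte\dom A = \inte\overline{\dom A}$, and convexity of $\inte\dom A$---and handle them sequentially, with Rockafellar's local boundedness theorem as the key preliminary tool. For the local boundedness preliminary I would fix $x_0 \in \inte\dom A$ with $x_0 + rB_X \subseteq \dom A$ for some $r > 0$, consider the closed convex sets $C_n := \bigcap_{(y,y^*) \in \gra A}\{x : \langle y - x, y^*\rangle \leq n\}$, use the monotonicity inequality to show $\bigcup_n C_n \supseteq x_0 + rB_X$, and then invoke the Baire category theorem to obtain some $n_0$ with $\inte C_{n_0} \neq \varnothing$; the resulting support inequalities at an interior point of $C_{n_0}$ give a uniform bound $\sup\{\|x^*\| : x^* \in Ax,\, x \in V\} < \infty$ on a neighborhood $V$ of $x_0$.

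For convexity of $\overline{\dom A}$ my preferred route is the Fitzpatrick function $F_A(x, x^*) := \sup_{(y, y^*) \in \gra A}\{\langle x, y^*\rangle + \langle y, x^*\rangle - \langle y, y^*\rangle\}$, which is proper lower semicontinuous convex on $X \times X^*$, satisfies $F_A \geq \scal{\cdot}{\cdot}$ with equality exactly on $\gra A$ by maximality, and has $\gra A \subseteq \dom F_A$. The essential step is to prove $P_X \dom F_A \subseteq \overline{\dom A}$: I would argue by contradiction---if $(x_0, x_0^*) \in \dom F_A$ but $x_0 \notin \overline{\dom A}$, then Hahn-Banach separates $x_0$ from $\overline{\dom A}$ by some $v^* \in X^*$, and a direct computation shows that $\gra A \cup \{(x_0, x_0^* + tv^*)\}$ is monotone for sufficiently large $t > 0$, contradicting maximality. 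Combined with $\dom A \subseteq P_X \dom F_A$ and convexity of $P_X \dom F_A$ (projection of a convex set), this yields $\overline{\dom A} = \overline{P_X \dom F_A}$, which is convex.

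For the equality $\inte\dom A = \inte\overline{\dom A}$ the inclusion $\subseteq$ is immediate. For the converse, take $x \in \inte\overline{\dom A}$; the local boundedness argument needs only a ball inside $\overline{\dom A}$ (which is now convex), so $A$ is locally bounded at $x$. Pick a net $(x_\alpha, x_\alpha^*) \in \gra A$ with $x_\alpha \to x$; the Banach-Alaoglu theorem (Fact~\ref{BaAlo}) yields a weak$^*$ cluster point $x^*$, and passing to the limit in the monotonicity inequality shows $(x, x^*)$ is monotonically related to $\gra A$, so maximality forces $x^* \in Ax$, whence $x \in \dom A$ and in fact $x \in \inte\dom A$. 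Convexity of $\inte\dom A$ is then automatic, being the interior of the convex set $\overline{\dom A}$.

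The main obstacle is the inclusion $P_X \dom F_A \subseteq \overline{\dom A}$, where maximality must be invoked via Hahn-Banach separation and a careful choice of the scalar $t$ to keep the enlarged graph monotone; the verification rests on bounding $\langle x_0 - y, x_0^* + tv^* - y^*\rangle$ uniformly over $\gra A$ using the definition of $F_A$ and the separation parameter. A secondary subtlety is extending local boundedness from $\inte\dom A$ to $\inte\overline{\dom A}$, which requires revisiting the Baire argument to confirm it still applies whenever an absorbing ball is contained in $\overline{\dom A}$ rather than $\dom A$ itself.
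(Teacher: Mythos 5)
This statement is quoted in the paper as a Fact, with the proof deferred to Rockafellar and to Simons' monograph, so there is no in-paper argument to compare against; judged on its own terms, your sketch contains one repairable defect and one fatal one. The repairable defect is in the Baire step: the sets $C_n=\bigcap_{(y,y^*)\in\gra A}\{x:\langle y-x,y^*\rangle\le n\}$ need not cover $x_0+rB_X$, because for $x\in\dom A$ with $x^*\in Ax$ monotonicity only gives $\langle y-x,y^*\rangle\le\|y-x\|\,\|x^*\|$, which is unbounded over $\gra A$ (already for $A=\partial(\tfrac{1}{2}|\cdot|^2)$ on $\RR$ the supremum over the graph is $+\infty$ at every $x$, so every $C_n$ is empty). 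You must normalize --- e.g.\ restrict the intersection to pairs with $\|y\|\le n$, or divide by $1+\|y\|$ --- before Baire category can be invoked.

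The fatal defect is in your proof that $\overline{\dom A}$ is convex. You apply Hahn--Banach to separate $x_0\notin\overline{\dom A}$ from $\overline{\dom A}$, but $\overline{\dom A}$ is precisely the set whose convexity is in question, and a point outside a closed \emph{non-convex} set need not be separable from it by a hyperplane. What your Fitzpatrick computation legitimately yields (separating from $\overline{\conv\dom A}$ instead) is $P_X\left[\dom F_A\right]\subseteq\overline{\conv\dom A}$, hence $\overline{P_X\left[\dom F_A\right]}=\overline{\conv\left[\dom A\right]}$; that is exactly Fact~\ref{CoHull} of the paper and it does not imply convexity of $\overline{\dom A}$. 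A telling symptom is that your argument for this part never uses the hypothesis $\inte\dom A\neq\varnothing$, yet the convexity of $\overline{\dom A}$ for a general maximally monotone operator on a nonreflexive space is an open question, so any proof ignoring that hypothesis cannot be correct. The classical route runs in the opposite order from yours: first use local boundedness together with your Banach--Alaoglu/maximality limit argument to show that every point of $\inte\conv\dom A$ (nonempty by hypothesis) already belongs to $\dom A$; from $\inte\conv\dom A\subseteq\dom A\subseteq\overline{\conv\dom A}$ one then reads off simultaneously that $\overline{\dom A}=\overline{\conv\dom A}$ is convex and that $\inte\dom A=\inte\overline{\dom A}=\inte\conv\dom A$. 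Your part on $\inte\overline{\dom A}\subseteq\dom A$ is essentially this step, so the right ingredients are present, but it must be carried out on $\inte\conv\dom A$ and placed first, rather than resting on a convexity claim you have not yet established.
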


The Fitzpatrick function defined below is an important tool in Monotone
Operator Theory.
\begin{fact}[Fitzpatrick]
\emph{(See {\cite[Corollary~3.9]{Fitz88}}.)}
\label{f:Fitz}
Let $A\colon X\To X^*$ be  monotone,  and set
\begin{equation*}
F_A\colon X\times X^*\to\RX\colon
(x,x^*)\mapsto \sup_{(a,a^*)\in\gra A}
\big(\scal{x}{a^*}+\scal{a}{x^*}-\scal{a}{a^*}\big),
\end{equation*}
 the \emph{Fitzpatrick function} associated with $A$.
Suppose also $A$ is maximally monotone. Then for every $(x,x^*)\in X\times X^*$, the inequality
$\scal{x}{x^*}\leq F_A(x,x^*)$ is true,
and the equality holds if and only if $(x,x^*)\in\gra A$.
\end{fact}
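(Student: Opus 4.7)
The plan is to rewrite the quantity inside the supremum defining $F_A$ via the elementary identity
\[
\scal{x}{a^*}+\scal{a}{x^*}-\scal{a}{a^*}-\scal{x}{x^*}=-\scal{x-a}{x^*-a^*},
\]
which, upon taking the supremum over $(a,a^*)\in\gra A$, gives $F_A(x,x^*)-\scal{x}{x^*}=\sup_{(a,a^*)\in\gra A}\bigl(-\scal{x-a}{x^*-a^*}\bigr)$. This is the engine of the proof: it translates the Fitzpatrick function into a quantity controlled directly by the (maximal) monotonicity of $A$.

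With this reformulation in hand, I would argue in two cases. First, suppose $(x,x^*)\in\gra A$. Then monotonicity of $A$ forces every term inside the supremum to be non-positive, yielding $F_A(x,x^*)\leq\scal{x}{x^*}$, while the admissible choice $(a,a^*)=(x,x^*)$ in the defining supremum provides the reverse inequality. Hence $F_A(x,x^*)=\scal{x}{x^*}$, which handles both the asserted inequality on the graph and the ``if'' direction of the equality characterization.

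Second, suppose $(x,x^*)\notin\gra A$. Here I would invoke maximal monotonicity: it forces $(x,x^*)$ to fail to be monotonically related to $\gra A$, so there exists some $(a,a^*)\in\gra A$ with $\scal{x-a}{x^*-a^*}<0$. Substituting this particular pair into the rewritten supremum immediately yields $F_A(x,x^*)>\scal{x}{x^*}$, which simultaneously completes the global inequality $\scal{x}{x^*}\leq F_A(x,x^*)$ and supplies the ``only if'' direction.

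I do not expect any genuine analytic obstacle. The only non-mechanical step is noticing the algebraic rearrangement above; everything else is a transparent use of monotonicity in the first case and the very definition of maximality in the second. No topological or completeness properties of $X$ enter the argument, which is one reason the Fitzpatrick function is such a versatile tool across general Banach spaces.
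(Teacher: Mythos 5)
Your proof is correct and complete: the algebraic identity $\scal{x}{a^*}+\scal{a}{x^*}-\scal{a}{a^*}-\scal{x}{x^*}=-\scal{x-a}{x^*-a^*}$, together with monotonicity on the graph and the use of maximality to produce a pair $(a,a^*)\in\gra A$ with $\scal{x-a}{x^*-a^*}<0$ off the graph, is exactly the standard argument for this result. The paper states this as a Fact with a citation to Fitzpatrick and supplies no proof of its own, but your argument matches the classical one in every essential respect.
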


The next result is the key to our arguments.

\begin{fact}
\emph{(See \cite[Theorem~3.4 and Corollary~5.6]{Voi1},
or \cite[Theorem~24.1(b)]{Si2}.)}
\label{f:referee1}
Let $A, B:X\To X^*$ be maximally monotone operators. Assume
$\bigcup_{\lambda>0} \lambda\left[P_X(\dom F_A)-P_X(\dom F_B)\right]$
is a closed subspace.
If
\begin{equation*}
F_{A+B}\geq\langle \cdot,\,\cdot\rangle\;\text{on \; $X\times X^*$},
\end{equation*}
then $A+B$ is maximally monotone.
\end{fact}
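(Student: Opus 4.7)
The plan is to prove maximality of $A+B$ directly from the definition: given $(z,z^*)\in X\times X^*$ monotonically related to $\gra(A+B)$, the goal is to exhibit $a^*\in Az$ and $b^*\in Bz$ with $a^*+b^*=z^*$. Taking the supremum over $(y,y^*)\in\gra(A+B)$ in the inequality that expresses monotonic relatedness yields at once
\[
F_{A+B}(z,z^*)\;\leq\;\langle z,z^*\rangle,
\]
and combined with the standing hypothesis $F_{A+B}\geq\langle \cdot,\cdot\rangle$ this forces equality $F_{A+B}(z,z^*)=\langle z,z^*\rangle$. So the real task is to convert this scalar equality into a bona fide decomposition of $z^*$.

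Next I would relate $F_{A+B}$ to the individual Fitzpatrick functions. For any $y\in X$ and any $a^*\in Ay$, $b^*\in By$ the pair $(y,a^*+b^*)$ lies in $\gra(A+B)$, so a term-by-term majorization in the definition of $F_{A+B}$ gives
\[
F_{A+B}(x,x^*)\;\leq\;\inf_{a^*+b^*=x^*}\bigl[F_A(x,a^*)+F_B(x,b^*)\bigr]
\]
for every $(x,x^*)$. Applied at $(z,z^*)$ and rearranged via the equality already obtained,
\[
0\;=\;F_{A+B}(z,z^*)-\langle z,z^*\rangle\;\leq\;\inf_{a^*+b^*=z^*}\Bigl\{\bigl[F_A(z,a^*)-\langle z,a^*\rangle\bigr]+\bigl[F_B(z,b^*)-\langle z,b^*\rangle\bigr]\Bigr\}.
\]
Each bracket is non-negative by Fact~\ref{f:Fitz}, so it suffices to show the infimum on the right is attained and equals $0$. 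If $(a_0^*,b_0^*)$ is a minimizer with $a_0^*+b_0^*=z^*$, then both brackets vanish, whence by Fact~\ref{f:Fitz} one concludes $a_0^*\in Az$ and $b_0^*\in Bz$, giving $z^*\in(A+B)(z)$.

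For the attainment step I would apply a Fenchel--Rockafellar duality argument to the two proper convex lower semicontinuous functions $\varphi_A(x,a^*):=F_A(x,a^*)-\langle x,a^*\rangle$ and $\varphi_B(x,b^*):=F_B(x,b^*)-\langle x,b^*\rangle$ on $X\times X^*$, viewed after the substitution $b^*=z^*-a^*$. The closed-subspace hypothesis on $\bigcup_{\lambda>0}\lambda\left[P_X(\dom F_A)-P_X(\dom F_B)\right]$ is precisely the Attouch--Brezis / Simons--Za\l{}inescu constraint qualification needed to guarantee that the primal infimum is finite, attained, and coincides with the dual supremum, thereby producing the minimizing $(a_0^*,b_0^*)$. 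The main obstacle is exactly this duality step: the classical Rockafellar/Moreau constraint qualifications require an \emph{interior} point, whereas here only a closed-subspace condition is available, so one must invoke the Attouch--Brezis closed-range theorem (or the representative-function machinery of Simons--Za\l{}inescu) in the form that allows subspace regularity, and verify that the coincidence $F_{A+B}=F_A\,\square_2\,F_B$ suggested above actually holds in that weaker regime.
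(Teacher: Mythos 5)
The paper states this result as a Fact imported from Voisei and from Simons' monograph and offers no proof of its own, so your attempt can only be measured against the arguments in those references. Your overall reduction is the correct one and agrees with theirs in outline: for $(z,z^*)$ monotonically related to $\gra(A+B)$ one gets $F_{A+B}(z,z^*)\le\langle z,z^*\rangle$, hence equality under the standing hypothesis, and it then suffices to produce $a_0^*+b_0^*=z^*$ with $F_A(z,a_0^*)=\langle z,a_0^*\rangle$ and $F_B(z,b_0^*)=\langle z,b_0^*\rangle$, which by Fact~\ref{f:Fitz} yields $z^*\in Az+Bz$. You have also correctly identified the closed-subspace hypothesis as an Attouch--Brezis-type qualification.

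The gap is at the decisive step. The inequality you actually derive, $F_{A+B}(z,z^*)\le\inf_{a^*+b^*=z^*}\bigl[F_A(z,a^*)+F_B(z,b^*)\bigr]$, points the wrong way: after subtracting $\langle z,z^*\rangle$ it reads ``$0\le\inf\{\cdots\}$'', which is automatic because each bracket is nonnegative by Fact~\ref{f:Fitz}, so it cannot deliver what you need, namely that the infimum is $\le 0$ and attained. Nor can that bound be extracted from the scalar equality $F_{A+B}(z,z^*)=\langle z,z^*\rangle$ alone: for a monotone operator $T$ not yet known to be maximal, $F_T(z,z^*)=\langle z,z^*\rangle$ does not imply $(z,z^*)\in\gra T$ (consider the restriction of a maximally monotone operator to a dense proper subset of its graph). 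In the cited proofs the required upper bound is obtained on the dual side: one sets $h(x,x^*):=\inf_{u^*+v^*=x^*}\bigl[F_A(x,u^*)+F_B(x,v^*)\bigr]$, uses the closed-subspace condition to compute $h^*$ on $X^*\times X^{**}$ exactly as an inf-convolution of $F_A^*$ and $F_B^*$ in the first variable, and exploits $F_A^*\ge\langle\cdot,\cdot\rangle$ and $F_B^*\ge\langle\cdot,\cdot\rangle$ --- consequences of the maximality of $A$ and $B$ separately that your argument never invokes --- before a Simons--Z\u{a}linescu-type criterion converts this, together with the hypothesis $F_{A+B}\ge\langle\cdot,\cdot\rangle$, into maximality of $A+B$. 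Your sketch stops at naming ``Fenchel--Rockafellar duality'' without formulating the dual problem, without explaining how the hypothesis on $F_{A+B}$ enters it, and without confronting the fact that the dual variables live in $X^{**}$, which is exactly the obstruction that makes the sum problem hard outside reflexive spaces. As it stands, the proposal is an accurate statement of what must be proved rather than a proof.
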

Applying Fact~\ref{CoHull},
we can avoid computing the domain of the Fitzpatrick functions
in Fact~\ref{f:referee1} (see Corollary~\ref{VoiSimn:1} below).

\begin{fact}
\emph{(See \cite[Theorem~3.6]{BY2} or \cite{BY3}.)}
\label{CoHull}
Let $A:X\To X^*$ be a maximally monotone operator. Then
\begin{align*}
\overline{\conv\left[\dom A\right]}=\overline{P_X\left[\dom F_A\right]}.
\end{align*}
\end{fact}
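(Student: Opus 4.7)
The plan is to prove the two inclusions separately. The inclusion $\overline{\conv[\dom A]} \subseteq \overline{P_X[\dom F_A]}$ is immediate from Fact~\ref{f:Fitz}: for every $(a,a^*)\in\gra A$, Fitzpatrick's identity gives $F_A(a,a^*)=\langle a,a^*\rangle<+\infty$, so $a\in P_X[\dom F_A]$, whence $\dom A\subseteq P_X[\dom F_A]$. Since $F_A$ is a supremum of continuous affine functionals, it is convex and lower semicontinuous; its effective domain is therefore convex, and so is its linear projection $P_X[\dom F_A]$. Taking the closed convex hull yields this inclusion.

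For the reverse inclusion $\overline{P_X[\dom F_A]} \subseteq \overline{\conv[\dom A]}$, I would pick $x_0\in P_X[\dom F_A]$ and fix $x_0^*\in X^*$ with $F_A(x_0,x_0^*)=M<+\infty$. The key idea is to perturb the second argument by $\lambda\phi$ for arbitrary $\phi\in X^*$ and $\lambda\geq 0$ and to combine two elementary estimates. On the one hand,
\begin{align*}
F_A(x_0,x_0^*+\lambda\phi)
&= \sup_{(a,a^*)\in\gra A}\bigl[\langle x_0,a^*\rangle+\langle a,x_0^*\rangle-\langle a,a^*\rangle + \lambda\langle a,\phi\rangle\bigr]\\
&\leq M+\lambda\sup_{a\in\dom A}\langle a,\phi\rangle.
\end{align*}
On the other hand, the Fitzpatrick inequality of Fact~\ref{f:Fitz} gives $F_A(x_0,x_0^*+\lambda\phi)\geq\langle x_0,x_0^*+\lambda\phi\rangle$. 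Comparing the two estimates yields
\[
\langle x_0,x_0^*\rangle+\lambda\langle x_0,\phi\rangle \leq M+\lambda\sup_{a\in\dom A}\langle a,\phi\rangle \qquad \text{for every } \lambda\geq 0.
\]
Dividing by $\lambda$ and letting $\lambda\to+\infty$ forces $\langle x_0,\phi\rangle\leq\sup_{a\in\dom A}\langle a,\phi\rangle$ (trivially when the right-hand side is $+\infty$). Since this holds for every $\phi\in X^*$, the Hahn--Banach support-function characterization of closed convex sets delivers $x_0\in\overline{\conv[\dom A]}$.

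The only non-routine step is spotting the perturbation $x_0^*\mapsto x_0^*+\lambda\phi$: it is precisely this device that exposes the tension between the sublinear-in-$\lambda$ upper bound on $F_A$ (coming from additivity inside the supremum) and the linear-in-$\lambda$ lower bound $F_A\geq\langle\cdot,\cdot\rangle$. Once the perturbation is used, the remainder is routine convex analysis, and no deeper structural fact about maximally monotone operators beyond Fact~\ref{f:Fitz} is needed.
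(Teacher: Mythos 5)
Your proof is correct. The paper states this result only as a Fact with a citation to \cite[Theorem~3.6]{BY2} and gives no proof of its own, so there is nothing internal to compare against; your argument --- the easy inclusion from Fitzpatrick's identity $F_A(a,a^*)=\langle a,a^*\rangle$ on $\gra A$ together with convexity of $\dom F_A$, and the reverse inclusion via the perturbation $x_0^*\mapsto x_0^*+\lambda\phi$, the lower bound $F_A\geq\langle\cdot,\cdot\rangle$ from Fact~\ref{f:Fitz} (the one place maximal monotonicity is genuinely used), and Hahn--Banach separation --- is complete and is the standard proof from the cited sources.
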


\begin{lemma}\label{NonL:1}
Let $A, B\colon X\To X^*$ be  maximally monotone,
 and suppose that
$\bigcup_{\lambda>0}\lambda\left[\dom A-\dom B\right]$
 is a closed convex subset of $X$.
Then
\begin{align*}\bigcup_{\lambda>0}\lambda\left[\dom A-\dom B\right]=
\bigcup_{\lambda>0}\lambda\left[P_{X}\dom F_A-P_{X}\dom F_B\right].\end{align*}
\end{lemma}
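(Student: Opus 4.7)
The plan is to prove the two inclusions separately. The easy direction is $\subseteq$: since $(a,a^{*})\in\gra A$ gives $(a,a^{*})\in\dom F_{A}$, we have $\dom A\subseteq P_{X}\dom F_{A}$, and likewise for $B$; scaling by $\lambda>0$ yields the inclusion.

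For the reverse inclusion $\supseteq$, I would chain the following facts to transport $P_{X}\dom F_{A}-P_{X}\dom F_{B}$ into the set $S:=\bigcup_{\lambda>0}\lambda[\dom A-\dom B]$. First, Fact~\ref{CoHull} applied to $A$ and $B$ gives $P_{X}\dom F_{A}\subseteq\overline{\conv\dom A}$ and $P_{X}\dom F_{B}\subseteq\overline{\conv\dom B}$. Second, two general set-theoretic facts: for any subsets $C_{1},C_{2}$ of $X$,
\begin{equation*}
\conv C_{1}-\conv C_{2}\subseteq\conv(C_{1}-C_{2}),\qquad \overline{C_{1}}-\overline{C_{2}}\subseteq\overline{C_{1}-C_{2}},
\end{equation*}
the first because $\sum_{i}t_{i}a_{i}-\sum_{j}s_{j}b_{j}=\sum_{i,j}t_{i}s_{j}(a_{i}-b_{j})$ is a convex combination of differences, and the second by taking limits along a product net. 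Combining these yields
\begin{equation*}
P_{X}\dom F_{A}-P_{X}\dom F_{B}\subseteq\overline{\conv\dom A}-\overline{\conv\dom B}\subseteq\overline{\conv(\dom A-\dom B)}.
\end{equation*}

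Now the hypothesis enters: $S$ is by assumption closed and convex, and clearly $S\supseteq \dom A-\dom B$ (take $\lambda=1$). Consequently $S\supseteq\overline{\conv(\dom A-\dom B)}$, so $P_{X}\dom F_{A}-P_{X}\dom F_{B}\subseteq S$. Finally, $S$ is closed under multiplication by positive scalars (if $x=\mu(a-b)$ with $\mu>0$, then $\lambda x=(\lambda\mu)(a-b)\in S$), hence
\begin{equation*}
\bigcup_{\lambda>0}\lambda\bigl[P_{X}\dom F_{A}-P_{X}\dom F_{B}\bigr]\subseteq S,
\end{equation*}
which is exactly the desired inclusion.

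There is no real obstacle here; the only place that needs care is the transfer from $\overline{\conv\dom A}-\overline{\conv\dom B}$ into $\overline{\conv(\dom A-\dom B)}$ via the two elementary containments above, and then the observation that the hypothesis ``closed and convex'' is precisely strong enough to absorb this closure of a convex hull inside $S$. Positive-homogeneity of $S$ is automatic from its definition as a union of positive dilations.
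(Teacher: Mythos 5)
Your proof is correct and follows essentially the same route as the paper's: both directions rest on $\dom A\subseteq P_X\dom F_A\subseteq\overline{\conv\dom A}$ (via the Fitzpatrick function and Fact~\ref{CoHull}), the elementary containments for convex hulls and closures of differences, and the closed-convex hypothesis to absorb $\overline{\conv(\dom A-\dom B)}$ back into the union. The only cosmetic difference is that you work at the level $\lambda=1$ and invoke positive homogeneity at the end, whereas the paper carries the union over $\lambda$ through the whole chain.
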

\allowdisplaybreaks
\begin{proof}
By Fact~\ref{f:Fitz} and Fact~\ref{CoHull}, we have
\begin{align*}
&\bigcup_{\lambda>0} \lambda\left[\dom A-\dom B\right]\subseteq
\bigcup_{\lambda>0} \lambda\left[P_{X}\dom F_A-P_{X}\dom F_B\right]
\subseteq\bigcup_{\lambda>0} \lambda\left[\overline{\conv \dom A}-\overline{\conv \dom B}\right]\\
&\subseteq\bigcup_{\lambda>0} \lambda\left[\overline{\conv\dom A-\conv\dom B}\right]=\bigcup_{\lambda>0} \lambda\left[\overline{\conv\left[\dom A-\dom B\right]}\right]\subseteq
\overline{\bigcup_{\lambda>0} \lambda\conv\left[\dom A-\dom B\right]}\\
&=\bigcup_{\lambda>0} \lambda\left[\dom A-\dom B\right]\quad \text{(by the  assumption)}.
\end{align*}
Hence $\bigcup_{\lambda>0}\lambda\left[\dom A-\dom B\right]=
\bigcup_{\lambda>0}\lambda\left[P_{X}\dom F_A-P_{X}\dom F_B\right]$.
\end{proof}

\begin{corollary}
\label{VoiSimn:1}
Let $A, B:X\To X^*$ be maximally monotone operators. Assume that
$\bigcup_{\lambda>0} \lambda\left[\dom A-\dom B\right]$
is a closed subspace.
If
\begin{equation*}
F_{A+B}\geq\langle \cdot,\,\cdot\rangle\;\text{on \; $X\times X^*$},
\end{equation*}
then $A+B$ is maximally monotone.
\end{corollary}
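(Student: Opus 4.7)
The plan is essentially to combine Lemma~\ref{NonL:1} with Fact~\ref{f:referee1}, since Corollary~\ref{VoiSimn:1} is stated with a weaker-looking hypothesis (closed subspace of $\dom A-\dom B$-differences) than what appears in Fact~\ref{f:referee1} (closed subspace of $P_X\dom F_A-P_X\dom F_B$-differences). The task is really to verify that one hypothesis implies the other under maximal monotonicity, and then to quote the Voisei--Simons result.

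First I would note that the assumption that $\bigcup_{\lambda>0}\lambda[\dom A-\dom B]$ is a closed subspace of $X$ in particular means that it is a closed convex subset of $X$. Hence the hypothesis of Lemma~\ref{NonL:1} is satisfied for the pair $(A,B)$.

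Second, I would invoke Lemma~\ref{NonL:1} directly to conclude
\begin{equation*}
\bigcup_{\lambda>0}\lambda\bigl[\dom A-\dom B\bigr]
\;=\;\bigcup_{\lambda>0}\lambda\bigl[P_{X}\dom F_A-P_{X}\dom F_B\bigr].
\end{equation*}
Since the left-hand side is a closed subspace by assumption, the right-hand side is too.

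Finally, combining this with the Fitzpatrick hypothesis $F_{A+B}\ge\langle\cdot,\cdot\rangle$ puts us precisely in the situation of Fact~\ref{f:referee1}, which gives that $A+B$ is maximally monotone. I do not foresee any real obstacle here, since both the lemma and the fact are already in place and the argument is a one-line substitution; the only thing to be careful about is recording that a closed subspace is automatically a closed convex set, so that Lemma~\ref{NonL:1} applies verbatim.
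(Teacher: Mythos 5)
Your proposal is correct and is exactly the paper's argument: the paper's proof reads ``Apply Fact~\ref{f:referee1} and Lemma~\ref{NonL:1} directly,'' and you have simply spelled out the (routine) observation that a closed subspace is a closed convex set so that Lemma~\ref{NonL:1} applies, after which Fact~\ref{f:referee1} finishes the job. No gaps.
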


\begin{proof}
Apply Fact~\ref{f:referee1} and Lemma~\ref{NonL:1} directly.
\end{proof}

Now we cite some results on operators of type (FPV).
\begin{fact}[Fitzpatrick-Phelps and Verona-Verona]
\emph{(See \cite[Corollary~3.4]{FitzPh}, \cite[Theorem~3]{VV1} or \cite[Theorem~48.4(d)]{Si2}.)}
\label{f:referee0d}\index{subdifferential operator}
Let $f:X\rightarrow\RX$ be proper, lower semicontinuous and convex.
Then $\partial f$ is of type (FPV).
\end{fact}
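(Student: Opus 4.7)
Let $U \subseteq X$ be an open convex set with $U \cap \dom \partial f \neq \varnothing$ and suppose $(x_0, x_0^*) \in U \times X^*$ is monotonically related to $\gra \partial f \cap (U \times X^*)$; I need to conclude that $x_0^* \in \partial f(x_0)$. My first move is a translation: $\tilde f(y) := f(y) - \scal{y}{x_0^*}$ is again proper, lower semicontinuous and convex, and $\gra \partial \tilde f = \{(y, y^* - x_0^*) : (y, y^*) \in \gra \partial f\}$. The hypothesis then reads $\scal{y - x_0}{y^*} \geq 0$ for every $(y, y^*) \in \gra \partial \tilde f \cap (U \times X^*)$, while the conclusion becomes $0 \in \partial \tilde f(x_0)$, i.e., $x_0$ is a global minimizer of $\tilde f$. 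I argue this by contradiction: assume $\tilde f(y_1) < \tilde f(x_0)$ for some $y_1 \in X$. Convexity of $\tilde f$ and openness of $U$ around $x_0 \in \dom \tilde f$ let me replace $y_1$ by a small convex combination $(1-t) x_0 + t y_1$ so that $y_1 \in U$ and $[x_0, y_1] \subset U$.

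Next I examine the one-variable function $\phi(t) := \tilde f((1-t) x_0 + t y_1)$, finite and convex on $[0,1]$ with $\phi(1) < \phi(0)$. By continuity at $0$, I fix $s \in (0, 1)$ with $\phi(s) > \phi(1)$; the chord inequality then gives $\phi'_+(s) \leq \frac{\phi(1) - \phi(s)}{1 - s} < 0$. Set $w := (1-s) x_0 + s y_1 \in U$. The plan is to produce $(z, z^*) \in \gra \partial \tilde f$ with $z \in U$ and $\scal{z - x_0}{z^*} < 0$, contradicting the translated monotone relation. Choose $\varepsilon \in (0, \phi(s) - \phi(1))$; the $\varepsilon$-directional derivative obeys $\tilde f'_\varepsilon(w; y_1 - x_0) \leq \frac{\phi(1) - \phi(s) + \varepsilon}{1 - s} < 0$, and the duality identity $\sup\{\scal{v}{w^*} : w^* \in \partial_\varepsilon \tilde f(w)\} = \tilde f'_\varepsilon(w; v)$ forces every $w^* \in \partial_\varepsilon \tilde f(w)$ to satisfy $\scal{y_1 - x_0}{w^*} < 0$, hence $\scal{w - x_0}{w^*} = s\scal{y_1 - x_0}{w^*} \leq M$ for some explicit $M < 0$.

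Br{\o}ndsted--Rockafellar then yields $(z, z^*) \in \gra \partial \tilde f$ with $\|z - w\|$ and $\|z^* - w^*\|$ as small as I wish; choosing these small compared to $|M|$, $\|w^*\|$, and the distance from $w$ to $X \setminus U$ keeps $z \in U$ and preserves $\scal{z - x_0}{z^*} < 0$, completing the contradiction. The main obstacle in this plan is the duality identity $\sup_{w^* \in \partial_\varepsilon \tilde f(w)} \scal{v}{w^*} = \tilde f'_\varepsilon(w; v)$: although $\partial_\varepsilon \tilde f(w)$ is non-empty at every $w \in \dom \tilde f$, this identity must be invoked even when $w$ lies on the boundary of $\dom \tilde f$, where the ordinary subdifferential $\partial \tilde f(w)$ can be empty; this forces the argument to route through the $\varepsilon$-subdifferential (and the Hahn--Banach step embedded in its duality theory) before Br{\o}ndsted--Rockafellar can be applied to land back in $\gra \partial \tilde f$.
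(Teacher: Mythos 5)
The paper offers no proof of this statement --- it is imported as a known fact, and the cited sources (Verona--Verona, Simons) obtain it by a quite different route: $\partial f+N_C=\partial(f+\iota_C)$ is maximally monotone by Rockafellar's theorem plus the Attouch--Br\'ezis sum rule, and the Simons/Verona--Verona criterion (Fact~\ref{f:refer02a}) then upgrades this to type (FPV). Your direct attack via $\varepsilon$-subdifferentials and Br\o ndsted--Rockafellar is a legitimate strategy in principle, but as written it has two genuine gaps, and they are precisely the two hard points of the theorem. The first: you silently assume $x_0\in\dom\tilde f$. The (FPV) hypothesis only gives $x_0\in U$ and $U\cap\dom\partial f\neq\varnothing$. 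If $\tilde f(x_0)=+\infty$, the convex-combination replacement of $y_1$ no longer keeps the value below $\tilde f(x_0)$ (you would need the combination to land in $\dom\tilde f$, which convexity does not guarantee when one endpoint lies outside the domain), $\phi$ is not finite on $[0,1]$, continuity of $\phi$ at $0$ fails, and in the worst case the open segment $\left]x_0,y_1\right[$ misses $\dom\tilde f$ entirely (e.g.\ $f=\iota_C$ with $C$ a segment in the plane and $x_0$ off its affine hull), so there is no point $w$ at which $\partial_\varepsilon\tilde f(w)\neq\varnothing$. Handling $x_0\notin\dom f$ is exactly what separates type (FPV) from plain maximality at points of $\dom\partial f$, and it needs a separate idea.

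The second gap is the Br\o ndsted--Rockafellar step. For $w^*\in\partial_\varepsilon\tilde f(w)$ the theorem produces $(z,z^*)\in\gra\partial\tilde f$ with $\|z-w\|\leq\lambda$ and $\|z^*-w^*\|\leq\varepsilon/\lambda$: the product of the two bounds is $\varepsilon$, so for a fixed $\varepsilon$ you cannot make both ``as small as you wish''. The relevant error satisfies $\big|\scal{z-x_0}{z^*}-\scal{w-x_0}{w^*}\big|\leq\lambda\|w^*\|+\varepsilon+\|w-x_0\|\varepsilon/\lambda$, and the sum of the first and last terms is at least $2\sqrt{\varepsilon\|w^*\|\,\|w-x_0\|}$ no matter how $\lambda$ is chosen. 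Since $\partial\tilde f(w)$ may be empty, every $w^*\in\partial_\varepsilon\tilde f(w)$ can have $\|w^*\|\geq c/\varepsilon$ (already for $f(x)=-\sqrt{x}$ at $w=0$), so this lower bound does not tend to $0$ as $\varepsilon\downarrow0$ and need not be smaller than $|M|$; for instance with $f(x,y)=x-\sqrt{y}$, $w=(0,0)$ and $w-x_0=(-1,0)$ one has $\scal{w-x_0}{w^*}=-1$ for every admissible $w^*$ while $\|w^*\|\geq 1/(4\varepsilon)$, so the perturbation can swamp $M$. Closing this gap requires either a refined Br\o ndsted--Rockafellar/Ekeland argument that gives one-sided control of $\scal{z-w}{z^*}$, or the reduction through $\partial(f+\iota_C)$ and Fact~\ref{f:refer02a} used in the sources the paper cites.
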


\begin{fact}[Simons]
\emph{(See \cite[Theorem~44.2]{Si2}.)}
\label{SDMn:pv}
Let $A:X\To X^*$ be  of type (FPV). Then
\begin{align*}
\overline{\dom A}=\overline{\conv \big(\dom A\big)}=\overline{P_X\big(\dom F_A\big)}.
\end{align*}
\end{fact}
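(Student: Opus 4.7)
The overall strategy I would adopt is to reduce the three-way equality to a single convexity statement. The inclusions $\overline{\dom A}\subseteq\overline{\conv\dom A}\subseteq\overline{P_X(\dom F_A)}$ are essentially free: the first is trivial, and the second follows because Fact~\ref{f:Fitz} gives $\dom A\subseteq P_X(\dom F_A)$, while $P_X(\dom F_A)$ is convex since $F_A$ is convex. Because an operator of type (FPV) is in particular maximally monotone, Fact~\ref{CoHull} already delivers the equality $\overline{\conv\dom A}=\overline{P_X(\dom F_A)}$. Consequently, the whole content of the Fact is that $\overline{\dom A}$ itself is convex, i.e.\ $\overline{\conv\dom A}\subseteq\overline{\dom A}$.

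To prove this convexity I would argue by contradiction. If $\overline{\dom A}$ is not convex, then (possibly after a small approximation using that the complement of $\overline{\dom A}$ is open) there exist $u,v\in\dom A$ and $t\in(0,1)$ with $w:=tu+(1-t)v\notin\overline{\dom A}$. I would take $U$ to be a thin open convex tube around the segment $[u,v]$, so that $w\in U$ and $u\in U\cap\dom A$, making the (FPV) hypothesis available for this $U$. It now suffices to produce some $w^*\in X^*$ for which $(w,w^*)$ is monotonically related to $\gra A\cap(U\times X^*)$, for then (FPV) forces $(w,w^*)\in\gra A$, contradicting $w\notin\overline{\dom A}$.

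The construction of $w^*$ is the main obstacle. Starting from $F_A(w,w_0^*)<+\infty$ for some $w_0^*$ (which exists since $w\in\conv\dom A\subseteq P_X(\dom F_A)$), one only knows that $\langle w-b,w_0^*-b^*\rangle$ is uniformly bounded below over $\gra A$, not nonnegative. The natural fix is to upgrade this bound by adding a large positive multiple $sz^*$ of a Hahn--Banach functional $z^*$ strictly separating $w$ from the closed set $\overline{\dom A}$; the added term $s\langle w-b,z^*\rangle\geq s\delta$ pushes the inner product in the right direction. The delicate point is that $b^*$ can range unboundedly even on $\gra A\cap(U\times X^*)$, so the extra slack cannot simply be absorbed by a scalar estimate. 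Simons' argument in \cite[Theorem~44.2]{Si2} handles this by shrinking $U$ carefully and performing a Fitzpatrick-function analysis of $A$ (together with the normal cone to $\overline{\dom A}$) on the tube, so that the required monotonic relation emerges directly; this is the technical core I would invoke to close the argument, after which (FPV) finishes the contradiction and yields convexity of $\overline{\dom A}$.
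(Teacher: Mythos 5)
This statement is quoted as a Fact from Simons' monograph and the paper supplies no proof of it, so there is no internal argument to compare yours against. Your reduction is correct and does locate the content: the inclusions $\overline{\dom A}\subseteq\overline{\conv(\dom A)}$ and $\dom A\subseteq P_X(\dom F_A)$ are immediate, $P_X(\dom F_A)$ is convex, Fact~\ref{CoHull} already yields $\overline{\conv(\dom A)}=\overline{P_X(\dom F_A)}$ for any maximally monotone $A$, and everything hinges on the convexity of $\overline{\dom A}$.

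The mechanism you sketch for that core step, however, cannot work. In your setup $w=tu+(1-t)v$ with $u,v\in\dom A$, so for \emph{every} $z^*\in X^*$ one has $\langle w,z^*\rangle=t\langle u,z^*\rangle+(1-t)\langle v,z^*\rangle\leq\max\{\langle u,z^*\rangle,\langle v,z^*\rangle\}\leq\sup_{c\in\overline{\dom A}}\langle c,z^*\rangle$: a functional strictly separating $w$ from $\overline{\dom A}$ simply does not exist (and Hahn--Banach separation would in any case require the convexity of $\overline{\dom A}$, which is precisely what is in question). Hence the term $s\langle w-b,z^*\rangle\geq s\delta$ with $\delta>0$ that is supposed to absorb the defect of $F_A(w,w_0^*)$ is unavailable for any $s$; the obstruction is not that $b^*$ ranges unboundedly but that the separating functional is absent. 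The correct route never separates $w$ from the domain globally: one shows $P_X(\dom F_A)\subseteq\overline{\dom A}$ by applying the (FPV) property on a shrinking open convex set of segments issuing from a point of $\dom A$ toward $w$ and deriving $F_A(w,\cdot)\equiv+\infty$ from an unbounded sequence in the graph. In fact the paper's own toolkit closes the gap cleanly: if $z\in P_X(\dom F_A)\setminus\overline{\dom A}$, then Fact~\ref{LeWExc:3} produces $(a_n,a_n^*)\in\gra A$ with $(a_n)$ bounded and $\langle z-a_n,a_n^*\rangle\to+\infty$, whence $F_A(z,z^*)\geq\langle z-a_n,a_n^*\rangle+\langle a_n,z^*\rangle\to+\infty$ for every $z^*$, a contradiction; thus $\conv(\dom A)\subseteq P_X(\dom F_A)\subseteq\overline{\dom A}$ and Fact~\ref{CoHull} finishes. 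Since you ultimately invoke Simons' Theorem~44.2 for the technical core --- exactly what the paper's citation does --- your write-up is not circular, but the separation sketch should be removed or replaced along these lines, as it cannot be completed as stated.
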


The following result presents a sufficient condition for
a maximally monotone operator to be of type (FPV).
\begin{fact}[Simons and Verona-Verona]
\emph{(See \cite[Theorem~44.1]{Si2}, \cite{VV1} or \cite{Bor2}.)}
\label{f:refer02a}
Let $A:X\To X^*$ be maximally monotone. Suppose that
for every closed convex subset $C$ of $X$
with $\dom A \cap \inte C\neq \varnothing$, the operator
$A+N_C$ is maximally monotone.
Then $A$ is of type  (FPV).
\end{fact}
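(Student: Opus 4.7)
The plan is to verify the type (FPV) condition directly. Fix an open convex set $U \subseteq X$ with $U \cap \dom A \neq \varnothing$ and a pair $(x, x^*) \in X \times X^*$ with $x \in U$ such that $(x, x^*)$ is monotonically related to $\gra A \cap (U \times X^*)$; the goal is to conclude $(x, x^*) \in \gra A$. The strategy is to find a closed convex $C \subseteq U$ with $x \in \inte C$ and $\dom A \cap \inte C \neq \varnothing$, invoke the hypothesis to get $A + N_C$ maximally monotone, check that $(x, x^*)$ is monotonically related to $\gra(A + N_C)$, and then exploit $x \in \inte C$ to kill the normal-cone contribution.

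For the construction of $C$, pick $a_0 \in U \cap \dom A$. The segment $[x, a_0]$ lies in $U$ by convexity and is compact, so openness of $U$ yields some $r > 0$ with $[x, a_0] + r B_X \subseteq U$. Setting $C := [x, a_0] + \tfrac{r}{2} B_X$ produces a closed convex subset of $U$ whose interior contains both $x$ and $a_0$; in particular $a_0 \in \dom A \cap \inte C$, so the hypothesis of the fact applies and gives maximal monotonicity of $A + N_C$.

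To verify monotonic relatedness, take any $(y, w^*) \in \gra(A + N_C)$ and write $w^* = y^* + z^*$ with $y^* \in Ay$ and $z^* \in N_C(y)$. Since $y \in C \subseteq U$ and $y \in \dom A$, we have $(y, y^*) \in \gra A \cap (U \times X^*)$, so the assumption on $(x, x^*)$ gives $\langle x - y, x^* - y^* \rangle \geq 0$. Because $x \in C$ and $z^* \in N_C(y)$, the definition of the normal cone forces $\langle x - y, z^*\rangle \leq 0$. Adding the two inequalities yields $\langle x - y, x^* - w^*\rangle \geq 0$.

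Maximal monotonicity of $A + N_C$ now places $(x, x^*) \in \gra(A + N_C)$, i.e., $x^* \in Ax + N_C(x)$. Since $x \in \inte C$ one has $N_C(x) = \{0\}$, so $x^* \in Ax$, which is precisely $(x, x^*) \in \gra A$. The only nontrivial ingredient is the construction of $C$, and that rests on compactness of $[x, a_0]$ together with openness of $U$; the rest is a direct unwinding of the definitions of monotonic relatedness, normal cone, and maximal monotonicity.
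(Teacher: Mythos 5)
The paper states this result as a quoted Fact with citations and gives no proof of its own, so there is nothing internal to compare against; your argument is correct and is essentially the standard proof from the cited sources. The construction of $C=[x,a_0]+\tfrac{r}{2}B_X$ as a closed convex tube inside $U$ (using compactness of the segment), the verification that $(x,x^*)$ is monotonically related to $\gra(A+N_C)$ via the normal-cone inequality $\scal{x-y}{z^*}\le 0$, and the final step $N_C(x)=\{0\}$ for $x\in\inte C$ are all sound.
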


\begin{fact}[Boundedness below]\emph{(See  \cite[Fact~4.1]{BY1}.)}\label{extlem}
Let $A:X\rightrightarrows X^*$ be monotone and $x\in\inte\dom A$.
 Then there exist $\delta>0$ and  $M>0$ such that
 $x+\delta B_X\subseteq\dom A$ and
 $\sup_{a\in x+\delta B_X}\|Aa\|\leq M$.
Assume that $(z,z^*)$ is monotonically related to $\gra A$. Then
\begin{align*}
\langle z-x, z^*\rangle
\geq \delta\|z^*\|-(\|z-x\|+\delta) M.
\end{align*}
\end{fact}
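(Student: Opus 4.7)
The plan is to split the proof into two independent pieces. First, the existence of $\delta>0$ and $M>0$ with $x+\delta B_X\subseteq\dom A$ and $\sup_{a\in x+\delta B_X}\|Aa\|\leq M$ is the classical \emph{local boundedness of monotone operators at interior points of the domain}, a standard result that can be extracted, for example, from Phelps' monograph or from the Fitzpatrick–function / Baire-category argument. I would simply invoke this theorem; the bulk of the work for this statement is really this step.

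Second, granted local boundedness, the displayed inequality is a clean one-step consequence of monotonicity. For an arbitrary $u\in B_X$, set $a:=x+\delta u\in x+\delta B_X\subseteq\dom A$ and pick any $a^*\in Aa$, so $\|a^*\|\leq M$. Monotonicity of $A$ applied to the pair $(z,z^*)$ (monotonically related to $\gra A$) and $(a,a^*)$ yields
\begin{equation*}
\langle z-a,\,z^*-a^*\rangle \geq 0,
\qquad\text{hence}\qquad
\langle z-a,\,z^*\rangle\;\geq\;\langle z-a,\,a^*\rangle.
\end{equation*}
Adding $\langle a-x,z^*\rangle=\delta\langle u,z^*\rangle$ to both sides gives
\begin{equation*}
\langle z-x,\,z^*\rangle\;\geq\;\langle z-a,\,a^*\rangle + \delta\langle u,z^*\rangle
\;\geq\;-\|z-a\|\,M + \delta\langle u,z^*\rangle
\;\geq\;-(\|z-x\|+\delta)\,M+\delta\langle u,z^*\rangle,
\end{equation*}
where the last inequality uses $\|z-a\|\leq\|z-x\|+\delta\|u\|\leq\|z-x\|+\delta$.

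Finally, the left-hand side $\langle z-x,z^*\rangle$ does not depend on $u$, so I take the supremum over $u\in B_X$ on the right-hand side. Since $\sup_{u\in B_X}\langle u,z^*\rangle=\|z^*\|$ (the definition of the dual norm), this yields
\begin{equation*}
\langle z-x,\,z^*\rangle \;\geq\; \delta\|z^*\| - (\|z-x\|+\delta)\,M,
\end{equation*}
as desired. The only real obstacle is the invocation of local boundedness; once it is at hand, everything else is a direct monotonicity calculation combined with the Hahn–Banach-type identification of $\|z^*\|$ as a supremum over the unit ball.
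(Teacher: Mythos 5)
The paper offers no proof of this statement---it is quoted as a Fact with a citation to \cite[Fact~4.1]{BY1}---and your argument is correct and is precisely the standard one: invoke Rockafellar-type local boundedness at the interior point $x$ (shrinking $\delta$ if needed so that the ball both lies in $\dom A$ and carries the bound $M$), then combine the monotone relatedness of $(z,z^*)$ with $(x+\delta u, a^*)$ and take the supremum over $u\in B_X$ to recover $\delta\|z^*\|$ from the dual norm. This matches the proof in the cited reference, so there is nothing to correct.
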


We need the following bunch of useful tools from \cite{BY4FV}.
\begin{fact}\emph{(See  \cite[Proposition~3.1]{BY4FV}.)}\label{FProCVS}
Let $A:X\To X^*$ be   of type (FPV), and let
$B:X\rightrightarrows X^*$ be maximally monotone. Suppose
that $\dom A \cap \inte \dom B\neq\varnothing$.
Let $(z,z^*)\in X\times X^*$ with $z\in\overline{\dom B}$.
Then
\begin{align*}F_{A+B}(z,z^*)\geq\langle z,z^*\rangle.
\end{align*}
\allowdisplaybreaks
\end{fact}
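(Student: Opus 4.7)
The strategy is proof by contradiction. Suppose $F_{A+B}(z,z^*)<\langle z,z^*\rangle$. Unpacking the supremum that defines $F_{A+B}$ and rearranging yields $\varepsilon>0$ such that
\begin{equation*}
\langle z-a,\ z^*-a^*-b^*\rangle\geq\varepsilon \qquad (\star)
\end{equation*}
holds for every $(a,a^*)\in\gra A$ with $a\in\dom B$ and every $b^*\in Ba$. The key observation is that producing a single $w\in\dom A\cap\dom B$ with $z^*\in(A+B)(w)$ already contradicts $(\star)$ directly, since substituting $(a, a^*+b^*)=(w,z^*)$ forces $0=\langle z-w,z^*-z^*\rangle\geq\varepsilon$. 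The entire plan therefore aims at producing such a $w$.

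I next set up the geometric scaffolding. Fix $y_0\in\dom A\cap\inte\dom B$, which exists by hypothesis, and apply Fact~\ref{extlem} to $B$ at $y_0$ to obtain $\delta,M>0$ with $y_0+\delta B_X\subseteq\dom B$ and $\sup_{y\in y_0+\delta B_X}\|By\|\leq M$. By Fact~\ref{f:refer02c}, $\inte\dom B$ is open, convex, and coincides with $\inte\overline{\dom B}$; the line-segment principle for convex sets then yields $y_\lambda:=(1-\lambda)y_0+\lambda z\in\inte\dom B$ for every $\lambda\in[0,1)$, with $y_\lambda\to z$ as $\lambda\uparrow 1$. On the open convex set $U:=\inte\dom B$, which meets $\dom A$ at $y_0$, the type-(FPV) hypothesis on $A$ is now available: for any $(w,w^*)$ with $w\in U$ that is monotonically related to $\gra A\cap(U\times X^*)$, one has $(w,w^*)\in\gra A$.

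The plan is then, for a suitable $\lambda\in(0,1)$, to pick $b_\lambda^*\in By_\lambda$ of norm $\leq M$ (legal as soon as $y_\lambda\in y_0+\delta B_X$) and to verify that $(y_\lambda,\ z^*-b_\lambda^*)$ is monotonically related to $\gra A\cap(U\times X^*)$; the FPV property then places $(y_\lambda,z^*-b_\lambda^*)\in\gra A$, whence $z^*\in A(y_\lambda)+b_\lambda^*\subseteq(A+B)(y_\lambda)$, producing $w:=y_\lambda$. For the verification, given $(a,a^*)\in\gra A\cap(U\times X^*)$ and any $b^*\in Ba$, I would split
\begin{equation*}
\langle y_\lambda-a,\ z^*-b_\lambda^*-a^*\rangle
= (1-\lambda)\langle y_0-a,\ z^*-b_\lambda^*-a^*\rangle
 + \lambda\langle z-a,\ z^*-a^*-b^*\rangle
 + \lambda\langle z-a,\ b^*-b_\lambda^*\rangle,
\end{equation*}
feed $(\star)$ into the middle summand (contributing $\lambda\varepsilon$), and invoke monotonicity of $B$, namely $\langle y_\lambda-a,\ b_\lambda^*-b^*\rangle\geq 0$, to control the $b^*$-versus-$b_\lambda^*$ cross-terms.

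The main obstacle is taming the first summand $(1-\lambda)\langle y_0-a,\ z^*-b_\lambda^*-a^*\rangle$, which a priori has no sign. Absorbing it into the $\lambda\varepsilon$ gain requires a uniform norm bound on $a^*$ as $(a,a^*)$ ranges over $\gra A\cap(U\times X^*)$ (at least on a suitable subnet), which one can wring out by feeding $(\star)$ back at the reference point $y_0$ (using the $M$-bound on $By_0$) and invoking the Banach--Alaoglu theorem (Fact~\ref{BaAlo}) to extract a weak$^*$ cluster point of bounded dual selections. Once such a bound is secured, taking $\lambda$ close enough to $1$ shrinks the prefactor $(1-\lambda)$ so that the remaining terms no longer overpower $\lambda\varepsilon$, completing the monotonic-relatedness verification. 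This delicate interplay---the FPV property of $A$ on $U$, the local boundedness of $B$ from Fact~\ref{extlem}, and the segment geometry coming from Fact~\ref{f:refer02c}---is exactly the technical heart of the proof.
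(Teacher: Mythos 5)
Your reduction to $(\star)$ and the choice of the segment $y_\lambda=(1-\lambda)y_0+\lambda z\in\inte\dom B$ are fine, but the core step of your plan --- verifying that $(y_\lambda,\,z^*-b^*_\lambda)$ is monotonically related to $\gra A\cap(U\times X^*)$ --- does not close, for three separate reasons. First, the monotonicity of $B$ points the wrong way for your cross-term: writing $\lambda(z-a)=(y_\lambda-a)-(1-\lambda)(y_0-a)$ gives
\[
\lambda\langle z-a,\,b^*-b^*_\lambda\rangle=-\langle y_\lambda-a,\,b^*_\lambda-b^*\rangle-(1-\lambda)\langle y_0-a,\,b^*-b^*_\lambda\rangle,
\]
and since $\langle y_\lambda-a,\,b^*_\lambda-b^*\rangle\geq0$ you are subtracting a nonnegative quantity of uncontrolled size, while the residual still carries $\|b^*\|$, which is unbounded as $a$ ranges over $U$ (it blows up as $a$ approaches the boundary of $\dom B$, by the mechanism of Fact~\ref{FCTV:1}). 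Monotonicity of $B$ thus yields an \emph{upper} bound on this term, not the lower bound your absorption argument needs. Second, the summand $(1-\lambda)\langle y_0-a,\,z^*-b^*_\lambda-a^*\rangle$ requires a bound on $\|a^*\|$ uniform over \emph{all} of $\gra A\cap(U\times X^*)$; no such bound exists in general ($A$ is only locally bounded on $\inte\dom A$, and $U=\inte\dom B$ need not be contained in $\inte\dom A$), and Fact~\ref{BaAlo} cannot manufacture one: it presupposes boundedness and only produces a cluster point along a subnet, whereas monotone relatedness must be checked against every element of $\gra A\cap(U\times X^*)$, not a subnet. Third, the bound $\|b^*_\lambda\|\leq M$ from Fact~\ref{extlem} is valid only while $y_\lambda\in y_0+\delta B_X$, i.e.\ for $\lambda\leq\delta/\|z-y_0\|$, which conflicts with your need to take $\lambda$ close to $1$; for such $\lambda$ the point $y_\lambda$ approaches $z\in\overline{\dom B}$, where $B$ may be unbounded, so $b^*_\lambda$ is not under control either.

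The actual argument for this statement (\cite[Proposition~3.1]{BY4FV}; the present paper quotes it without proof) avoids all of this by never attempting to certify a single candidate point against the whole of $\gra A\cap(U\times X^*)$. It uses the (FPV) property only on small balls $H_n\subseteq\inte\dom B$ centred near points of the segment $[x_0,z]$: either a designated candidate already lies in $\gra A$, or (FPV) supplies a witness $(\widetilde{a_n},\widetilde{a_n}^*)\in\gra A\cap(H_n\times X^*)$ violating monotone relatedness, and these witnesses come with quantitative lower bounds on $\langle z-\widetilde{a_n},\widetilde{a_n}^*\rangle$. One then forces $F_{A+B}(z,z^*)=+\infty$ via blow-up and weak$^*$-compactness lemmas of the type of Facts~\ref{LeWExc:1} and~\ref{FCTV:2}; this is exactly the pattern of Proposition~\ref{ProCVS:P1} above, whose proof you should use as a template if you want to rework your attempt.
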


\begin{fact}\emph{(See  \cite[Lemma~2.10]{BY4FV}.)}\label{LeWExc:1}
Let $A:X\To X^*$ be   monotone, and let
$B:X\rightrightarrows X^*$ be maximally monotone. Let $(z, z^*)\in X\times X^*$. Suppose
 $x_0\in\dom A \cap \inte \dom B$ and that there exists a sequence $(a_n, a^*_n)_{n\in\NN}$ in $\gra A\cap\Big(\dom B\times X^*\Big)$ such that $(a_n)_{n\in\NN}$ converges to a point in
$\left[x_0,z\right[$, and
\begin{align*}
\langle z-a_n, a^*_n\rangle\longrightarrow+\infty.
\end{align*}
Then
 $F_{A+B}(z,z^*)=+\infty$.
\end{fact}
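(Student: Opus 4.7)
The plan is to lower-bound $F_{A+B}(z, z^*)$ using a sequence of graph points built from $(a_n, a_n^*)$ together with a selection from $B a_n$, and to push that bound to $+\infty$. Since $a_n \in \dom A \cap \dom B$, I would pick $b_n^* \in B a_n$ for each $n$; then $(a_n, a_n^* + b_n^*) \in \gra(A+B)$, and the Fitzpatrick definition gives
\begin{equation*}
F_{A+B}(z, z^*) \geq \langle a_n, z^*\rangle + \langle z - a_n, a_n^*\rangle + \langle z - a_n, b_n^*\rangle.
\end{equation*}
The first term is bounded (since $a_n$ converges), and the middle term tends to $+\infty$ by hypothesis. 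Everything reduces to showing the last term stays bounded below in $n$.

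Next I would parametrize the limit of $(a_n)$ as $y_0 = (1-\lambda)x_0 + \lambda z$ with $\lambda \in [0, 1)$. If $\lambda = 0$, then $a_n \to x_0 \in \inte\dom B$, so eventually $a_n \in x_0 + \delta B_X$ from Fact~\ref{extlem} applied to $B$, on which $\|B\cdot\| \leq M$; hence $\|b_n^*\| \leq M$ and $\langle z - a_n, b_n^*\rangle$ is trivially bounded. For $\lambda \in (0,1)$, one has $y_0 \neq x_0$ and $z - y_0 = \tfrac{1-\lambda}{\lambda}(y_0 - x_0)$, which motivates the decomposition
\begin{equation*}
z - a_n = \tfrac{1-\lambda}{\lambda}(a_n - x_0) + \eta_n, \qquad \eta_n \to 0,
\end{equation*}
so that $\langle z - a_n, b_n^*\rangle = \tfrac{1-\lambda}{\lambda}\langle a_n - x_0, b_n^*\rangle + \langle \eta_n, b_n^*\rangle$.

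The critical estimate now comes from Fact~\ref{extlem}, applied to $B$ at $x_0 \in \inte\dom B$, with the pair $(a_n, b_n^*) \in \gra B$ playing the role of the monotonically related point: there exist $\delta, M > 0$ with
\begin{equation*}
\langle a_n - x_0, b_n^*\rangle \geq \delta \|b_n^*\| - (\|a_n - x_0\| + \delta) M.
\end{equation*}
Plugging this into the decomposition and using $|\langle \eta_n, b_n^*\rangle| \leq \|\eta_n\|\cdot\|b_n^*\|$ yields a lower bound of the form $\left[\tfrac{(1-\lambda)\delta}{\lambda} - \|\eta_n\|\right]\|b_n^*\| - C$, whose bracket becomes nonnegative once $n$ is large (as $\eta_n \to 0$), leaving $\langle z - a_n, b_n^*\rangle \geq -C$ uniformly. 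Combining with the first display, $F_{A+B}(z, z^*) \geq \langle z - a_n, a_n^*\rangle - O(1) \to +\infty$, so $F_{A+B}(z, z^*) = +\infty$. The main obstacle I anticipate is exactly the control of $\langle z - a_n, b_n^*\rangle$: a priori $\|b_n^*\|$ could blow up and this pairing drift to $-\infty$, swallowing the gain from $\langle z - a_n, a_n^*\rangle$; the saving observation is that $z - a_n$ is asymptotically parallel to $a_n - x_0$, the precise direction in which Fact~\ref{extlem} turns monotonicity of $B$ into a $+\delta\|b_n^*\|$ contribution rather than a $-\|b_n^*\|$ catastrophe.
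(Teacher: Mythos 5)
Your proof is correct. Note first that the paper does not actually prove this statement: it is quoted as a Fact from \cite[Lemma~2.10]{BY4FV}. Your argument is nevertheless complete. The key inequality $F_{A+B}(z,z^*)\geq \langle a_n,z^*\rangle+\langle z-a_n,a_n^*\rangle+\langle z-a_n,b_n^*\rangle$ is the right starting point, the identity $z-a_n=\tfrac{1-\lambda}{\lambda}(a_n-x_0)+\tfrac{1}{\lambda}(y_0-a_n)$ is exact (so $\eta_n=\tfrac{1}{\lambda}(y_0-a_n)\to 0$ as claimed), and Fact~\ref{extlem} applied to $B$ at $x_0$ with $(a_n,b_n^*)\in\gra B$ gives precisely the $+\delta\|b_n^*\|$ term that neutralizes a possible blow-up of $\|b_n^*\|$; the bracket $\tfrac{(1-\lambda)\delta}{\lambda}-\|\eta_n\|$ is eventually nonnegative uniformly in $\|b_n^*\|$, so $\langle z-a_n,b_n^*\rangle$ is indeed bounded below. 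The degenerate case $\lambda=0$ is correctly dispatched by local boundedness of $B$ near $x_0$. It is worth contrasting your route with the style the paper uses for the sibling result, Fact~\ref{FCTV:2}: there the author argues by contradiction, assumes $F_{A+B}(z,z^*)<+\infty$, normalizes $b_n^*/\|b_n^*\|$, extracts a weak$^*$ convergent subnet via Banach--Alaoglu, and derives incompatible signs for $\langle z-x_0,b_\infty^*\rangle$ --- again with Fact~\ref{extlem} supplying the positive contribution. Your proof is direct, avoids weak$^*$ compactness and subnets entirely, and treats the bounded and unbounded regimes of $\|b_n^*\|$ in one stroke; what the subnet argument buys in \cite{BY4FV} is a formulation (as in Fact~\ref{FCTV:2}) that also covers sequences $(a_n,a_n^*)$ satisfying only one-sided lower bounds $\langle z-a_n,a_n^*\rangle\geq K_n$ rather than divergence to $+\infty$.
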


\begin{fact}\emph{(See  \cite[Lemma~2.12]{BY4FV}.)}\label{LeWExc:3}
Let $A:X\To X^*$ be   of type (FPV). Suppose $x_0\in\dom A$ but
  that
$z\notin\overline{\dom A}$. Then there exists a sequence
$(a_n, a^*_n)_{n\in\NN}$ in $\gra A$ such that $(a_n)_{n\in\NN}$ converges to a point in
$\left[x_0,z\right[$ and
\begin{align*}
\langle z-a_n, a^*_n\rangle\longrightarrow+\infty.
\end{align*}
\end{fact}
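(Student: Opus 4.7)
My plan is to exploit Fact~\ref{SDMn:pv}, which gives that $\overline{\dom A}$ is closed and convex when $A$ is of type (FPV), and then invoke the (FPV) property on a shrinking family of open convex ``lens'' sets straddling the segment $[x_0,z]$, using Hahn--Banach separation to force the inner products to blow up.

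First I would identify the target limit point. Define
\[\lambda^*:=\sup\big\{\lambda\in[0,1]\,:\,(1-\lambda)x_0+\lambda z\in\overline{\dom A}\big\},\qquad y^*:=(1-\lambda^*)x_0+\lambda^* z.\]
Since $\overline{\dom A}$ is closed convex, contains $x_0$, and does not contain $z$, we get $\lambda^*\in[0,1)$, $y^*\in\overline{\dom A}$, and $(y^*,z]\cap\overline{\dom A}=\varnothing$. Strict Hahn--Banach separation of the point $z$ from the closed convex set $\overline{\dom A}$ next produces $v\in X^*$ and $\alpha\in\RR$ with
\[\sup_{a\in\overline{\dom A}}\langle a,v\rangle\le\alpha<\langle z,v\rangle,\qquad \gamma:=\langle z,v\rangle-\alpha>0.\]

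Next I would form the open convex sets
\[U_n:=\conv\big((y^*+\tfrac{1}{n}U_X)\cup(z+\tfrac{1}{n}U_X)\big),\]
each containing both $y^*$ and $z$, with $U_n\cap\dom A\neq\varnothing$ because $\dom A$ is dense in $\overline{\dom A}$ and $y^*\in\overline{\dom A}$. Every point of $U_n$ has the form $(1-t)b+tc$ with $\|b-y^*\|,\|c-z\|<1/n$, so it lies within $1/n$ of the segment $[y^*,z]$. Because $z\notin\dom A$, the pair $\big(z,(n/\gamma)v\big)$ is not in $\gra A$, so the (FPV) property applied to $U_n$ furnishes $(a_n,a_n^*)\in\gra A\cap(U_n\times X^*)$ with $\langle z-a_n,(n/\gamma)v-a_n^*\rangle<0$. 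Rearranging and using $\langle a_n,v\rangle\le\alpha$ yields
\[\langle z-a_n,a_n^*\rangle > (n/\gamma)\,\langle z-a_n,v\rangle \ge (n/\gamma)\,\gamma = n,\]
so $\langle z-a_n,a_n^*\rangle\to+\infty$.

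Finally, each $a_n$ lies within $1/n$ of the norm-compact segment $[y^*,z]$, so choosing nearest points $p_n\in[y^*,z]$ we have $\|a_n-p_n\|\le 1/n$; a subsequence of $(p_n)$ converges in norm to some $p^*\in[y^*,z]$, and then $a_n$ converges to $p^*$ along that subsequence. Since $a_n\in\dom A\subseteq\overline{\dom A}$ (closed) and $[y^*,z]\cap\overline{\dom A}=\{y^*\}$, we must have $p^*=y^*$; as every subsequence admits a further subsequence converging to $y^*$, we conclude $a_n\to y^*\in[x_0,z[$ in norm. The step requiring the most care is confirming that the lens-shaped $U_n$ is genuinely an open convex subset of $X$ with $U_n\cap\dom A\neq\varnothing$ in a general Banach space; beyond that the argument is a direct combination of (FPV), Hahn--Banach, and the compactness of the segment.
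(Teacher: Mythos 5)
Your argument is correct and is essentially the standard proof of this statement (the paper itself only cites \cite[Lemma~2.12]{BY4FV}, whose proof follows the same pattern): use convexity of $\overline{\dom A}$ from Fact~\ref{SDMn:pv} to locate the last point $y^*$ of $[x_0,z]$ in $\overline{\dom A}$, separate $z$ from $\overline{\dom A}$, and apply the (FPV) property to the non-graph points $(z,(n/\gamma)v)$ on open convex neighborhoods of the segment $[y^*,z]$ to force $\langle z-a_n,a_n^*\rangle>n$, with norm-compactness of the segment giving $a_n\to y^*$. The only cosmetic difference is your ``lens'' $\conv\big((y^*+\tfrac1n U_X)\cup(z+\tfrac1n U_X)\big)$ in place of the usual tube $[y^*,z]+\tfrac1n U_X$; both are open convex and meet $\dom A$, so the argument goes through.
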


The proof of Fact~\ref{FCTV:1} and Fact~\ref{FCTV:2} is mainly extracted from the part of the proof of \cite[Proposition~3.2]{BY4FV}.

\begin{fact}\label{FCTV:1}
Let $A:X\To X^*$ be maximally monotone and $z\in\overline{\dom A}\backslash\dom A$.
Then for every  sequence $(z_n)_{n\in\NN}$ in $\dom A$ such that $z_n\longrightarrow
z$, we have $\lim_{n\rightarrow\infty}\inf\|A(z_n)\|=+\infty$.
\end{fact}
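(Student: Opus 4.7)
The plan is to argue by contradiction, exploiting the maximal monotonicity of $A$ to produce a candidate extension of $\gra A$ containing $(z,z^*)$ for a suitable $z^*$, which forces $z\in\dom A$.

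First, I would suppose that $\liminf_{n\to\infty}\|A(z_n)\|<+\infty$. Under the natural reading $\|A(z_n)\|=\inf_{z_n^*\in A(z_n)}\|z_n^*\|$, this provides a subsequence (reindex to $n$) and a selection $z_n^*\in A(z_n)$ with $(\|z_n^*\|)_{n\in\NN}$ bounded. By the Banach--Alaoglu Theorem (Fact~\ref{BaAlo}), the bounded set $\{z_n^*\}$ has a weak$^*$ cluster point $z^*\in X^*$, so some subnet satisfies $z_{n_\alpha}^*\weakstarly z^*$; of course $z_{n_\alpha}\to z$ in norm along the same subnet.

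Next, I would pass to the limit in the monotonicity inequality. For every $(y,y^*)\in\gra A$, monotonicity gives
\begin{equation*}
\langle z_{n_\alpha}-y,\, z_{n_\alpha}^*-y^*\rangle\geq 0.
\end{equation*}
Splitting the pairing as
\begin{equation*}
\langle z_{n_\alpha}-y,\,z_{n_\alpha}^*-y^*\rangle
=\langle z_{n_\alpha}-z,\,z_{n_\alpha}^*-y^*\rangle+\langle z-y,\,z_{n_\alpha}^*-y^*\rangle,
\end{equation*}
the first term is controlled by $\|z_{n_\alpha}-z\|(\sup_\alpha\|z_{n_\alpha}^*\|+\|y^*\|)\to 0$, while the second tends to $\langle z-y,z^*-y^*\rangle$ by weak$^*$ convergence. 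Hence
\begin{equation*}
\langle z-y,\,z^*-y^*\rangle\geq 0\qquad\forall\,(y,y^*)\in\gra A,
\end{equation*}
i.e.\ $(z,z^*)$ is monotonically related to $\gra A$. Since $A$ is maximally monotone, $(z,z^*)\in\gra A$, so $z\in\dom A$, contradicting $z\in\overline{\dom A}\setminus\dom A$.

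The only delicate step is the passage to the limit in the bilinear pairing along a net, and it is standard once one uses the boundedness of $(z_{n_\alpha}^*)$: weak$^*$ convergence alone would not let one pair it against a convergent (not constant) sequence, but the splitting above turns the problematic cross term into a product of a norm-null factor and a bounded one. Everything else is routine; the crux is simply recognising that an interior-type bound on $\|A(z_n)\|$ would allow one to exhibit a monotone extension of $A$ at the boundary point $z$, which maximal monotonicity forbids.
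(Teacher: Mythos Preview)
Your proof is correct and follows essentially the same route as the paper's: contradiction, bounded subsequence of selections, Banach--Alaoglu to extract a weak$^*$-convergent subnet, then conclude $(z,z^*)\in\gra A$ by maximal monotonicity. The only difference is cosmetic---the paper outsources the limit step to an external reference (\cite[Fact~3.5]{BY1} or \cite[Section~2]{BFG}) asserting norm$\times$weak$^*$ closedness of $\gra A$, whereas you write out the splitting argument explicitly.
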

\begin{proof} Suppose to the contrary that
there exists a sequence $z^*_{n_k}\in A(z_{n_k})$  and $L>0$ such that $\sup_{k\in\NN}\|z^*_{n_k}\|\leq L$.
By Fact~\ref{BaAlo},
 there  exists a weak* convergent subnet, $(z^*_{\beta})_{\beta\in J}$ of
$(z^*_{n_k})_{k\in\NN}$ such that $z^*_{\beta}\weakstarly z^*_{\infty}\in X^*$.
\cite[Fact~3.5]{BY1} or \cite[Section~2, page~539]{BFG} shows that $(z, z^*_{\infty})\in\gra A$,
which contradicts our assumption that $z\notin \dom A$.
Hence we have our result holds.
\end{proof}

\begin{fact}\label{FCTV:2}
Let $A, B:X\To X^*$ be  monotone. Let $(z, z^*)\in X\times X^*$. Suppose
that $x_0\in\dom A \cap \inte \dom B$ and that there exist a sequence $(a_n, a^*_n)_{n\in\NN}$ in $\gra A\cap\big(\dom B\times X^*)$ and a sequence $(K_n)_{n\in\NN}$
 in $\RR$ such that $(a_n)_{n\in\NN}$ converges to a point in
$\left[x_0,z\right[$, and that
\begin{align}
\langle z- a_n, a^*_n\rangle\geq K_n.\label{FCTV:2:e1}
\end{align}
Assume that there exists a sequence $b^*_n\in Ba_n$ such that
$\frac{K_n}{\|b^*_n\|}\longrightarrow 0$ and $\|b^*_n\|\longrightarrow
+\infty$.
Then $F_{A+B}(z,z^*)=+\infty$.
\end{fact}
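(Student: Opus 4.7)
The plan is to lower-bound $F_{A+B}(z,z^*)$ by evaluating its defining supremum at the points $(a_n,a^*_n+b^*_n)\in\gra(A+B)$, which is legitimate since $a_n\in\dom A\cap\dom B$ and thus $a^*_n+b^*_n\in(A+B)a_n$. Using \eqref{FCTV:2:e1},
\[
F_{A+B}(z,z^*)\geq \langle z-a_n,a^*_n+b^*_n\rangle+\langle a_n,z^*\rangle\geq K_n+\langle z-a_n,b^*_n\rangle+\langle a_n,z^*\rangle.
\]
Since $(a_n)$ is bounded, $K_n/\|b^*_n\|\to 0$, and $\|b^*_n\|\to+\infty$, the task reduces to producing a strictly positive lower bound on $\liminf_n \langle z-a_n,b^*_n\rangle/\|b^*_n\|$, for then the right-hand side will grow like a positive multiple of $\|b^*_n\|\to+\infty$.

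To obtain this bound I would exploit the local boundedness of $B$ at $x_0$. Fact~\ref{extlem} applied to $B$ gives $\delta>0$ and $M>0$ with $x_0+\delta B_X\subseteq\dom B$ and $\|y^*\|\leq M$ for every $y\in x_0+\delta B_X$ and $y^*\in By$. Monotonicity of $B$ between $(a_n,b^*_n)$ and $(y,y^*)$ yields $\langle a_n-y,b^*_n\rangle\geq -(\|a_n-x_0\|+\delta)M$; taking the infimum over $y=x_0+\delta u$, $u\in B_X$, replaces the left side with $\langle a_n-x_0,b^*_n\rangle-\delta\|b^*_n\|$, so
\[
\langle a_n-x_0,b^*_n\rangle\geq \delta\|b^*_n\|-(\|a_n-x_0\|+\delta)M,
\]
and dividing by $\|b^*_n\|$ gives $\liminf_n\langle a_n-x_0,b^*_n\rangle/\|b^*_n\|\geq\delta$. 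Write the limit $a_\infty$ of $(a_n)$ as $(1-t)x_0+tz$ with $t\in[0,1)$. The case $t=0$ would force $a_n\in x_0+\delta B_X$ eventually and hence $\|b^*_n\|\leq M$, contradicting $\|b^*_n\|\to+\infty$; so $t\in(0,1)$ and $z-a_\infty=\tfrac{1-t}{t}(a_\infty-x_0)$. Since $|\langle a_n-a_\infty,b^*_n\rangle|/\|b^*_n\|\leq\|a_n-a_\infty\|\to 0$, the bound transfers to
\[
\liminf_n \frac{\langle z-a_n,b^*_n\rangle}{\|b^*_n\|}\geq\frac{(1-t)\delta}{t}>0.
\]

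Plugging this back into the opening inequality, for all large $n$ the right-hand side exceeds $K_n+\tfrac{(1-t)\delta}{2t}\|b^*_n\|+\langle a_n,z^*\rangle$, which tends to $+\infty$ because $\|b^*_n\|\to+\infty$ is multiplied by a positive constant while $K_n$ and $\langle a_n,z^*\rangle$ are $o(\|b^*_n\|)$ and $O(1)$ respectively. Therefore $F_{A+B}(z,z^*)=+\infty$. The main subtlety I expect is excluding the case $a_\infty=x_0$, since the positivity of the crucial ratio rests entirely on having $t>0$; that exclusion is precisely where local boundedness of $B$ around $x_0$ clashes with the hypothesis $\|b^*_n\|\to+\infty$, and once $t\in(0,1)$ is secured the remainder is routine bookkeeping of limits under the renormalization by $\|b^*_n\|$.
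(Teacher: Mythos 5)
Your proof is correct, and it reaches the conclusion by a cleaner route than the paper's. The two arguments share the same skeleton: evaluate the Fitzpatrick supremum at $(a_n,a^*_n+b^*_n)$, invoke the boundedness-below estimate (Fact~\ref{extlem}) at $x_0\in\inte\dom B$ to get $\langle a_n-x_0,b^*_n\rangle\geq\delta\|b^*_n\|-O(1)$, and then use the collinearity of $x_0$, $\lim_n a_n$ and $z$ to convert that into a positive lower bound on $\langle z-a_n,b^*_n\rangle$ of order $\|b^*_n\|$. The difference is in execution: the paper argues by contradiction, extracts via Banach--Alaoglu a weak$^*$ convergent subnet of $b^*_n/\|b^*_n\|$ with limit $b^*_\infty$, and derives the incompatible inequalities $\langle z-x_0,b^*_\infty\rangle\leq 0$ and $\langle z-x_0,b^*_\infty\rangle\geq\eta/\delta>0$; you work directly with $\liminf$s of the normalized pairings and never need compactness or subnets, which is more elementary. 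You also handle the degenerate case $a_n\to x_0$ (your $t=0$) explicitly and correctly, by noting that local boundedness of $B$ near $x_0$ would force $\|b^*_n\|\leq M$, contradicting $\|b^*_n\|\to+\infty$; the paper's write-up leaves this case implicit (its final division by the convex-combination parameter $\delta$ is formally vacuous when $\delta=0$, though the preceding inequality already yields the contradiction there). Your bookkeeping of the limits --- discarding $\langle a_n-a_\infty,b^*_n\rangle/\|b^*_n\|\to 0$ and pulling out the positive factor $(1-t)/t$ --- is sound, so the argument goes through as written.
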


\begin{proof}
By the assumption, there exists $0\leq\delta<1$ such that
\begin{align}
a_n\longrightarrow x_0+\delta(z-x_0).\label{FCTV:2:e2}
\end{align}
Suppose to the contrary that
\begin{align}
F_{A+B}(z,z^*)<+\infty.\label{FCTV:2:e3}
\end{align}
By Fact~\ref{BaAlo},
 there  exists a weak* convergent subnet, $(\frac{b^*_i}{\|b^*_i\|})_{i\in I}$ of
$\frac{b^*_n}{\|b^*_n\|}$ such that
\begin{align}
\frac{b^*_i}{\|b^*_i\|}\weakstarly b^*_{\infty}\in X^*.\label{PCSM:c4}
\end{align}
By \eqref{FCTV:2:e1}, we have
\begin{align*}
K_n+\Big\langle z-a_n, b^*_n\Big\rangle+\Big\langle z^*, a_n\Big\rangle&\leq
\Big\langle z-a_n, a_n^*\Big\rangle+
\Big\langle z-a_n, b^*_n\Big\rangle+\Big\langle z^*, a_n\Big\rangle\\
&\leq F_{A+B}(z,z^*)
\end{align*}
Thus
\begin{align}
\frac{K_n}{\|b^*_n\|}
+\Big\langle z-a_n, \frac{b^*_n}{\|b^*_n\|}\Big\rangle+\frac{1}{\|b^*_n\|}\Big\langle z^*, a_n\Big\rangle
&\leq \frac{F_{A+B}(z,z^*)}{\|b^*_n\|}.\label{PCSM:c3}
\end{align}

By the assumption that $\frac{K_n}{\|b^*_n\|}\longrightarrow 0$ and $\|b^*_n\|\longrightarrow
+\infty$, \eqref{FCTV:2:e2}, \eqref{FCTV:2:e3} and \eqref{PCSM:c4}, we  take the limit along the subnet in \eqref{PCSM:c3} to obtain
\begin{align*}
\Big\langle z-x_0-\delta(z-x_0), b^*_{\infty}\Big\rangle\leq0.
\end{align*}
Since $\delta<1$,
\begin{align}
\Big\langle z-x_0, b^*_{\infty}\Big\rangle\leq0.\label{PCSM:c6}
\end{align}

On the other hand, since $x_0\in\inte\dom B$
and $(a_n, b^*_n)\in\gra B$, Fact~\ref{extlem} implies that there exist
$\eta>0$ and $M>0$ such that
\begin{align*}\langle a_n-x_0, b_n^*\rangle
\geq\eta\|b^*_n\|-(\|a_n-x_0\|
+\eta)M.
\end{align*}
Thus
\begin{align*}
\langle a_n-x_0, \frac{b_n^*}{\|b^*_n\|}\rangle
\geq\eta-\frac{(\|a_n-x_0\|
+\eta)M}{\|b^*_n\|}.
\end{align*}
Since $\|b^*_n\|\longrightarrow +\infty$, by \eqref{FCTV:2:e2} and \eqref{PCSM:c4}, we
 take the limit along the subnet in the above inequality to obtain
\begin{align*}
\Big\langle x_0+\delta(z-x_0)-x_0, b^*_{\infty}\Big\rangle\geq\eta.
\end{align*}
Hence
\begin{align*}
\Big\langle z-x_0, b^*_{\infty}\Big\rangle\geq\frac{\eta}{\delta}>0,
\end{align*}
which contradicts \eqref{PCSM:c6}.  Hence $F_{A+B}(z,z^*)=
+\infty$.
\end{proof}

\section{Our main result}
\label{s:main}

The following result is the key technical tool for our main result (: Theorem~\ref{TePGV:1}).  The proof of Proposition~\ref{ProCVS:P1}
  follows in part that of \cite[Proposition~3.2]{BY4FV}.
\begin{proposition}\label{ProCVS:P1}
Let $A:X\To X^*$ be   of type (FPV), and let
$B:X\rightrightarrows X^*$ be maximally monotone. Suppose
 $x_0\in\dom A\cap \inte \dom B$ and  $(z,z^*)\in X\times X^*$. Assume
  that there  exist a sequence $(a_n)_{n\in\NN}$ in $\dom A\cap\left[\overline{\dom B}\backslash\dom B\right]$ and $\delta\in\left[0,1\right]$ such that $a_n\longrightarrow\delta z+(1-\delta) x_0$.
Then
$F_{A+B}(z,z^*)\geq\langle z,z^*\rangle$.

\end{proposition}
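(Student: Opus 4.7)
The plan is to split into two cases depending on whether $z\in\overline{\dom B}$. Let $y_0:=\delta z+(1-\delta)x_0$, the limit of $(a_n)$. Since $a_n\in\overline{\dom B}$ for every $n$, we have $y_0\in\overline{\dom B}$.

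Case 1: $z\in\overline{\dom B}$. Here the sequence hypothesis is unnecessary: Fact~\ref{FProCVS} applies directly, using that $A$ is of type (FPV), $B$ is maximally monotone, and $x_0\in\dom A\cap\inte\dom B$, to give $F_{A+B}(z,z^*)\geq\langle z,z^*\rangle$ at once.

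Case 2: $z\notin\overline{\dom B}$. Since $y_0\in\overline{\dom B}$ while $z\notin\overline{\dom B}$, we must have $\delta<1$, so $y_0\in\left[x_0,z\right[$. The aim is to prove the strictly stronger claim $F_{A+B}(z,z^*)=+\infty$ by producing a sequence to which Fact~\ref{LeWExc:1} or Fact~\ref{FCTV:2} can be applied. The key ingredients are: (a) by Fact~\ref{SDMn:pv}, $\overline{\dom A}=\overline{\conv\dom A}$ is convex, so $\left[x_0,a_n\right]\subseteq\overline{\dom A}$; (b) by Fact~\ref{f:refer02c} for $B$, $\overline{\dom B}$ is convex with $x_0\in\inte\dom B$, hence $\left[x_0,a_n\right[\subseteq\inte\dom B$; (c) by Fact~\ref{FCTV:1} applied to $B$ at the boundary point $a_n\in\overline{\dom B}\setminus\dom B$, every sequence in $\dom B$ approaching $a_n$ makes $\|B(\cdot)\|$ blow up. A diagonal construction along $\left[x_0,a_n\right[$ then yields $d_n\in\dom A\cap\inte\dom B$ with $\|d_n-a_n\|\leq 1/n$ (so $d_n\to y_0$) and some $e_n^*\in Bd_n$ with $\|e_n^*\|\geq n$.

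The main obstacle is to control the $A$-side inner product $\langle z-d_n,d_n^*\rangle$ for a suitable choice of $d_n^*\in A d_n$. If one can select $d_n^*$ so that this quantity is bounded below by some $-C$, then Fact~\ref{FCTV:2} applies with $K_n\equiv -C$, because $\|e_n^*\|\to+\infty$ forces $K_n/\|e_n^*\|\to 0$, yielding $F_{A+B}(z,z^*)=+\infty$. If instead no such lower bound is available, one must exploit the type (FPV) of $A$ more forcefully, e.g.\ via Fact~\ref{LeWExc:3} in the sub-case $z\notin\overline{\dom A}$, to produce $(d_n,d_n^*)\in\gra A$ with $d_n\to$ a point in $\left[x_0,z\right[$ and $\langle z-d_n,d_n^*\rangle\to+\infty$, after merging this with the $B$-side diagonal construction above so that $d_n\in\dom B$ holds eventually; Fact~\ref{LeWExc:1} then finishes the argument. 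This overall structure mirrors the proof of \cite[Proposition~3.2]{BY4FV} flagged in the statement.
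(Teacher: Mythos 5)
There is a genuine gap, and it sits exactly at what you call the ``main obstacle'': your proposal sets up the right framework (the case split on $z\in\overline{\dom B}$, the use of Fact~\ref{FProCVS} in the easy case, the convexity facts, and the blow-up of $\|B\|$ near the boundary points $a_n$ via Fact~\ref{FCTV:1}) but then leaves the decisive step unresolved. Neither of your two alternatives works in general. A uniform lower bound $\langle z-d_n,d_n^*\rangle\geq -C$ has no reason to exist: the $a_n$ may approach $y_0\in\overline{\dom A}\setminus$ (interior of anything), and the functionals available at nearby points of $\gra A$ can be unbounded. Your fallback via Fact~\ref{LeWExc:3} requires $z\notin\overline{\dom A}$, which is not part of the hypotheses and can easily fail (e.g.\ $\overline{\dom A}=X$); and even when it applies, the sequence it produces need not meet $\dom B$, and you give no mechanism for the ``merging''. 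A further unjustified step is the claim that your diagonal points $d_n\in\left[x_0,a_n\right[$ lie in $\dom A$: convexity of $\overline{\dom A}$ (Fact~\ref{SDMn:pv}) only puts them in $\overline{\dom A}$.

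The paper's proof resolves this by coupling the two sides quantitatively. Fixing $a_n^*\in Aa_n$, it uses the (FPV) property of $A$ on a small ball $H_n=t_na_n+(1-t_n)x_0+(1-t_n)\rho_0U_X\subseteq\inte\dom B$ to produce $(\widetilde{a_n},\widetilde{a_n}^*)\in\gra A\cap(H_n\times X^*)$ satisfying the \emph{explicit, $n$-dependent} estimate $\langle z-\widetilde{a_n},\widetilde{a_n}^*\rangle\geq -4K_0^2(\|a_n^*\|+1)=:K_n$ (the point is that the bound degrades only linearly in $\|a_n^*\|$, with a constant $K_0$ independent of $n$, thanks to the ratios $(1-t_n)/r_n$ and $s_n/r_n$ being controlled). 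Crucially, $t_n$ is chosen \emph{after} $\|a_n^*\|$ is known, close enough to $1$ that Fact~\ref{FCTV:1} forces $\inf\|B(H_n)\|\geq 4K_0^2(\|a_n^*\|+1)\,n$. Then $b_n^*\in B\widetilde{a_n}$ satisfies $|K_n|/\|b_n^*\|\leq 1/n\to 0$ even though $K_n$ may tend to $-\infty$, and Fact~\ref{FCTV:2} yields $F_{A+B}(z,z^*)=+\infty$. This diagonal choice of the radius against the size of $\|a_n^*\|$, together with the quantitative use of (FPV), is the idea missing from your proposal.
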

\begin{proof}
Suppose to the contrary that
\begin{align}
F_{A+B}(z,z^*)<\langle z,z^*\rangle.\label{EProF2:e1}
\end{align}
By the assumption, we have $\delta z+(1-\delta) x_0\in\overline{\dom B}$.
Since $a_n\notin\dom B$ and $x_0\in\inte\dom B$, Fact~\ref{FProCVS} and \eqref{EProF2:e1} imply that
\begin{align}
0<\delta<1 \quad\text{and }\quad \delta z+(1-\delta) x_0\neq x_0.\label{PCSMaL:ec1}
\end{align}
We set
\begin{align}
y_0:=\delta z+(1-\delta) x_0.\label{PCSMaL:ea1}
\end{align}
Since $a_n\in\dom A$, we let
\begin{align}
(a_n, a^*_n)\in\gra A,\quad\forall n\in\NN.
\end{align}

Since $x_0\in\dom A\cap \inte \dom B$, there exist $x^*_0, y^*_0\in X^*$ such that
$(x_0, x^*_0)\in\gra A$ and $(x_0, y^*_0)\in\gra B$. By $x_0\in\inte\dom B$, there exists $0<\rho_0\leq \|y_0-x_0\|$ by \eqref{PCSMaL:ec1} such that
\begin{align}x_0+\rho_0 U_X\subseteq\dom B.\label{PCSM:cc1}
\end{align}
Now we show that  there exists $\delta\leq t_n\in\left[1-\frac{1}{n},1\right[$ such that
 that
\begin{align}
H_n\subseteq\dom B~\mbox{and~}
\inf\big\|B\big(H_n\big)\big\|\geq 4K^2_0 (\|a^*_n\|+1)n,
\label{PCSM:c1}\end{align}
where \begin{align}H_n:&=t_na_n+(1-t_n) x_0+(1-t_n) \rho_0U_X\nonumber\\
K_0:&=\max\Big\{3\| z\|+2+3|x_0\|, \,
 \frac{1}{\delta}\big(\frac{2\|y_0-x_0\|}{\rho_0}+1\big)\big(\|x^*_0\|+1\big)\Big\}.
 \label{PCSMaL:ca1}
\end{align}
For every $s\in \left]0,1\right[$,  since $a_n\in\overline{\dom B}$,
 \eqref{PCSM:cc1} and Fact~\ref{f:refer02c} imply that
\begin{align*}sa_n+(1-s) x_0+(1-s) \rho_0 U_X=sa_n+(1-s)\left[x_0+ \rho_0 U_X\right]\subseteq \overline{\dom B}.
\end{align*}
By Fact~\ref{f:refer02c} again,
$sa_n+(1-s) x_0+(1-s) \rho_0 U_X\subseteq \inte\overline{\dom B}=\inte\dom B$.

It directly follows from Fact~\ref{FCTV:1} and $a_n\in\overline{\dom B}\backslash\dom B$ that the second part of \eqref{PCSM:c1} holds.

Set
\begin{align}
r_n:= \frac{\frac{1}{2}(1-t_n)\rho_0}{t_n\|y_0-a_n\|+(1-t_n)\|y_0-x_0\|}
.\label{PCSMaL:ed1}
\end{align}
Since $\rho_0\leq \|y_0-x_0\|$, we have $r_n\leq\frac{1}{2}$.
Now we show that
\begin{align}
v_n:&=r_n y_0+ (1-r_n)\left[t_na_n+(1-t_n) x_0\right]\nonumber\\
&=r_n\delta z+(1-r_n)t_n a_n+s_nx_0\in H_n,\label{PCSMaL:e5}
\end{align}
where $s_n:=\left[1-t_n+r_n(t_n-\delta)\right]$.

Indeed, we have
\begin{align*}
&\Big\|v_n-t_na_n-(1-t_n) x_0\Big\|=
\Big\|r_n y_0+ (1-r_n)\left[t_na_n+(1-t_n) x_0\right]-t_na_n-(1-t_n) x_0\Big\|\\
&=\Big\|r_n y_0-r_n\left[t_na_n+(1-t_n) x_0\right]\Big\|
=r_n\Big\|t_ny_0+(1-t_n)y_0-\left[t_na_n+(1-t_n) x_0\right]\Big\|\\
&=r_n\Big\|t_n(y_0-a_n)+(1-t_n)(y_0-x_0)\Big\|\leq
r_n\Big(t_n\|y_0-a_n\|+(1-t_n)\|y_0-x_0\|\Big)\\
&=\frac{1}{2}(1-t_n)\rho_0\quad\text{(by \eqref{PCSMaL:ed1})}.
\end{align*}
Hence $v_n\in H_n$ and thus \eqref{PCSMaL:e5} holds by \eqref{PCSMaL:ea1}.

Since $a_n\longrightarrow y_0$ and $v_n\in H_n$ by \eqref{PCSMaL:e5},
$v_n\longrightarrow y_0$.  Then we can and do suppose that
\begin{align}
\|v_n\|\leq\|y_0\|+1\leq\|z\|+\|x_0\|+1,\quad\forall n\in\NN\quad\text{(by \eqref{PCSMaL:ea1})}.\label{PCSMaL:cc2}
\end{align}
Since $a_n\longrightarrow y_0$ and $\|y_0-x_0\|>0$ by \eqref{PCSMaL:ec1}, we can suppose that
\begin{align*}
\|y_0-a_n\|\leq\|y_0-x_0\|,\quad\forall n\in\NN.
\end{align*}
Then by \eqref{PCSMaL:ed1},
\begin{align}
\frac{1-t_n}{r_n}\leq\frac{2\|y_0-x_0\|}{\rho_0},\quad\forall n\in\NN. \label{PCSMaL:c1}
\end{align}
Since $s_n=\left[1-t_n+r_n(t_n-\delta)\right]$, by \eqref{PCSMaL:c1} and $\delta\leq t_n<1$, we have
\begin{align}
\frac{s_n}{r_n}=\frac{1-t_n}{r_n}+t_n-\delta\leq \frac{2\|y_0-x_0\|}{\rho_0}+1,\quad\forall n\in\NN. \label{PCSMaL:c2}
\end{align}

Now  we show there exists $(\widetilde{a_n}, \widetilde{a_n}^*)_{n\in\NN}$
in $\gra A\cap (H_n\times X^*)$ such that
\begin{align}
\big\langle z-\widetilde{a_n},\widetilde{a_n}^*\big\rangle\geq-4K^2_0 (\|a^*_n\|+1).\label{EProF2:e2}
\end{align}
We consider two cases.

\emph{Case 1}: $(v_n,(2-t_n)a^*_n)\in\gra A$.

Set $(\widetilde{a_n}, \widetilde{a_n}^*):=(v_n,(2-t_n)a^*_n)$. Then we have
\begin{align}
&\langle z- \widetilde{a_n}, \widetilde{a_n}^*\rangle
=\langle z- v_n, (2-t_n){a_n}^*\rangle\geq-2\|z- v_n\|\cdot\|a^*_n\|\nonumber\\
&\geq-2\big(2\|z\|+\|x_0\|+1\big)\cdot\|a^*_n\|\geq-4K^2_0 (\|a^*_n\|+1)\quad\text{(by \eqref{PCSMaL:cc2} and
\eqref{PCSMaL:ca1})}.
\end{align}
Hence \eqref{EProF2:e2} holds since $v_n\in H_n$ by \eqref{PCSMaL:e5}.

\emph{Case 2}: $(v_n,(2-t_n)a^*_n)\notin\gra A$.

By Fact~\ref{SDMn:pv} and the assumption that $\{a_n, y_0, x_0\}\subseteq\overline{\dom A}$,  \eqref{PCSMaL:e5} shows that $v_n\in\overline{\dom A}$. Thus $H_n\cap\dom A\neq\varnothing$ by \eqref{PCSMaL:e5} again.
 Since $\Big(v_n,(2-t_n)a^*_n\Big)\notin\gra A$, $v_n\in H_n$ by \eqref{PCSMaL:e5},
  and $A$ is of type (FPV), there exists $(\widetilde{a_n}, \widetilde{a_n}^*)\in\gra A\cap (H_n\times X^*)$ such that
  \begin{align*}
  \Big\langle v_n-\widetilde{a_n}, \widetilde{a_n}^*-(2-t_n)a^*_n\Big\rangle>0.
  \end{align*}
 Thus by \eqref{PCSMaL:e5}, we have
 \begin{align}
 &\Big\langle v_n-\widetilde{a_n}, \widetilde{a_n}^*-(2-t_n)a^*_n\Big\rangle>0\nonumber\\
 &\Longrightarrow \Big\langle r_n\delta z+(1-r_n)t_n a_n+s_nx_0 -\widetilde{a_n}, \widetilde{a_n}^*-a^*_n-(1-t_n)a^*_n\Big\rangle>0\nonumber\\
 &\Longrightarrow \Big\langle r_n\delta z+(1-r_n)t_n a_n+s_nx_0 -\widetilde{a_n}, \widetilde{a_n}^*-a^*_n\Big\rangle\nonumber\\
 &\quad>\Big\langle r_n\delta z+(1-r_n)t_n a_n+s_nx_0 -\widetilde{a_n}, (1-t_n)a^*_n\Big\rangle\nonumber\\
 &\quad\geq- (1-t_n)\Big(\| z\|+\|a_n\|+\|x_0\| +\|\widetilde{a_n}\|\Big) \|a^*_n\|\label{PCSMaL:e12}
 \end{align}
Note that $\widetilde{a_n}=r_n\delta\widetilde{a_n}+(1-r_n)t_n\widetilde{a_n}+s_n\widetilde{a_n}$.
Thus \eqref{PCSMaL:e12} implies that
 \begin{align}
 &\Big\langle r_n\delta(z-\widetilde{a_n})+(1-r_n)t_n (a_n-\widetilde{a_n})+s_n(x_0-\widetilde{a_n}), \widetilde{a_n}^*-a^*_n\Big\rangle\nonumber\\
 &\quad>- (1-t_n)\Big(\| z\|+\|a_n\|+\|x_0\| +\|\widetilde{a_n}\|\Big) \|a^*_n\|\nonumber\\
 &\Longrightarrow
 \Big\langle r_n\delta(z-\widetilde{a_n})+s_n(x_0-\widetilde{a_n}), \widetilde{a_n}^*-a^*_n\Big\rangle\nonumber\\
 &\quad\geq(1-r_n)t_n\Big\langle a_n-\widetilde{a_n}, a^*_n-\widetilde{a_n}^*\Big\rangle- (1-t_n)\Big(\| z\|+\|a_n\|+\|x_0\| +\|\widetilde{a_n}\|\Big) \|a^*_n\|\nonumber\\
 &\quad\geq- (1-t_n)\Big(\| z\|+\|a_n\|+\|x_0\| +\|\widetilde{a_n}\|\Big) \|a^*_n\|
 \quad\text{(by the monotonicity of $A$)}\nonumber\\
 &\Longrightarrow
 \Big\langle r_n\delta(z-\widetilde{a_n})+s_n(x_0-\widetilde{a_n}), \widetilde{a_n}^*\Big\rangle\nonumber\\
 &\quad> \Big\langle r_n\delta(z-\widetilde{a_n})+s_n(x_0-\widetilde{a_n}), a^*_n\Big\rangle- (1-t_n)\Big(\| z\|+\|a_n\|+\|x_0\| +\|\widetilde{a_n}\|\Big) \|a^*_n\|\nonumber\\
 &\Longrightarrow
 r_n\delta\Big\langle z-\widetilde{a_n}, \widetilde{a_n}^*\Big\rangle
> s_n\Big\langle \widetilde{a_n}-x_0, \widetilde{a_n}^*\Big\rangle+\Big\langle r_n\delta(z-\widetilde{a_n})+s_n(x_0-\widetilde{a_n}), a^*_n\Big\rangle\nonumber\\
 &\quad- (1-t_n)\Big(\| z\|+\|a_n\|+\|x_0\| +\|\widetilde{a_n}\|\Big) \|a^*_n\|\label{PCSMaL:e13}
 \end{align}
 Since $\{(x_0, x^*_0), (\widetilde{a_n}, \widetilde{a_n}^*)\}\subseteq\gra A$,
 we have $\langle \widetilde{a_n}-x_0, \widetilde{a_n}^*\rangle\geq
 \langle \widetilde{a_n}-x_0, x_0^*\rangle$ by the monotonicity of $A$. Thus, by \eqref{PCSMaL:e13},
 \begin{align}
 &r_n\delta\Big\langle z-\widetilde{a_n}, \widetilde{a_n}^*\Big\rangle
> s_n\Big\langle \widetilde{a_n}-x_0, x_0^*\Big\rangle+\Big\langle r_n\delta(z-\widetilde{a_n})+s_n(x_0-\widetilde{a_n}), a^*_n\Big\rangle\nonumber\\
 &\quad- (1-t_n)\Big(\| z\|+\|a_n\|+\|x_0\| +\|\widetilde{a_n}\|\Big) \|a^*_n\|\nonumber\\
&\geq -s_n\|\widetilde{a_n}-x_0\|\cdot\| x_0^*\|-\ r_n\|z-\widetilde{a_n}\|\cdot\|a^*_n\|-s_n\|x_0-\widetilde{a_n}\|\cdot \|a^*_n\|\nonumber\\
 &\quad- (1-t_n)\Big(\| z\|+\|a_n\|+\|x_0\| +\|\widetilde{a_n}\|\Big) \|a^*_n\|\nonumber
 \end{align}
 Hence
  \begin{align}
 &\Big\langle z-\widetilde{a_n}, \widetilde{a_n}^*\Big\rangle
> -\frac{s_n}{r_n\delta}\|\widetilde{a_n}-x_0\|\cdot\| x_0^*\|-\ \frac{1}{\delta}\|z-\widetilde{a_n}\|\cdot\|a^*_n\|-\frac{s_n}{r_n\delta}\|x_0-\widetilde{a_n}\|\cdot \|a^*_n\|\nonumber\\
 &\quad- \frac{1-t_n}{r_n\delta}\Big(\| z\|+\|a_n\|+\|x_0\| +\|\widetilde{a_n}\|\Big) \|a^*_n\|\nonumber
 \end{align}
 Then combining \eqref{PCSMaL:c1} and \eqref{PCSMaL:c2}, we have
 \begin{align}
 &\Big\langle z-\widetilde{a_n}, \widetilde{a_n}^*\Big\rangle
> -(\frac{2\|y_0-x_0\|}{\rho_0}+1)\frac{1}{\delta}\|\widetilde{a_n}-x_0\|\cdot\| x_0^*\|-\ \frac{1}{\delta}\|z-\widetilde{a_n}\|\cdot\|a^*_n\|
\nonumber\\
 &\quad-(\frac{2\|y_0-x_0\|}{\rho_0}+1)\frac{1}{\delta}\|x_0-\widetilde{a_n}\|\cdot \|a^*_n\|- \frac{2\|y_0-x_0\|}{\rho_0\delta}\Big(\| z\|+\|a_n\|+\|x_0\| +\|\widetilde{a_n}\|\Big) \|a^*_n\|\nonumber\\
 &\quad\geq
 -K_0\|\widetilde{a_n}-x_0\|-K_0\|z-\widetilde{a_n}\|\cdot\|a^*_n\|
-K_0\|x_0-\widetilde{a_n}\|\cdot \|a^*_n\|\nonumber\\
&\quad- K_0\Big(\| z\|+\|a_n\|+\|x_0\| +\|\widetilde{a_n}\|\Big) \|a^*_n\|\quad\text{(by \eqref{PCSMaL:ca1})}\label{PCSMaL:e14}
 \end{align}
 Since $\widetilde{a_n}\in H_n$, $t_n\longrightarrow 1^{-}$
 and $a_n\longrightarrow
 y_0$,
 \begin{equation}
 \widetilde{a_n}\longrightarrow y_0. \label{PCSMaL:e9}
 \end{equation}
 Then we can and do suppose that
 \begin{align}
 \max\big\{\|a_n\|,\|\widetilde{a_n}\|\big\}\leq \|y_0\|+1\leq\|x_0\|+\|z\|+1, \quad\forall n\in\NN.\quad\text{(by \eqref{PCSMaL:ea1})}\label{PCSMaL:e15}
 \end{align}
 Then by \eqref{PCSMaL:e15}, \eqref{PCSMaL:e14} and \eqref{PCSMaL:ca1}, we have
 \begin{align}
 &\Big\langle z-\widetilde{a_n}, \widetilde{a_n}^*\Big\rangle
>-K_0^2-K_0^2\|a^*_n\|
-K_0^2 \|a^*_n\|- K_0^2\|a^*_n\|\geq -4K^2_0 (\|a^*_n\|+1).
\end{align}
Hence \eqref{EProF2:e2} holds.

Combining the above two cases, we have \eqref{EProF2:e2} holds.

Since $\widetilde{a_n}\in H_n$, \eqref{PCSM:c1} implies that $\widetilde{a_n}\in\dom B$.  Then combining \eqref{PCSMaL:e9}, \eqref{PCSMaL:ec1},
\eqref{PCSM:c1} and \eqref{EProF2:e2},  Fact~\ref{FCTV:2} implies that
$F_{A+B}(z,z^*)=+\infty$, which contradicts \eqref{EProF2:e1}.

Hence $F_{A+B}(z,z^*)\geq\langle z,z^*\rangle$.
\end{proof}

Now we come to our main result.
\begin{theorem}[Main result]\label{TePGV:1}
Let $A, B:X\To X^*$ be  maximally monotone
with $\dom A\cap\inte\dom B\neq\varnothing$. Assume
that $A+N_{\overline{\dom B}}$ is of type (FPV). Then
$A+B$ is maximally monotone.
\end{theorem}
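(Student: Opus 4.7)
The plan is to apply Corollary~\ref{VoiSimn:1}: I verify that $\bigcup_{\lambda>0}\lambda[\dom A-\dom B]$ is a closed subspace of $X$ and that $F_{A+B}(z,z^*)\geq\scal{z}{z^*}$ for every $(z,z^*)\in X\times X^*$. The first condition is immediate — picking $x_0\in\dom A\cap\inte\dom B$ and $\rho>0$ with $x_0+\rho B_X\subseteq\dom B$ gives $\dom A-\dom B\supseteq-\rho B_X$, so the union equals $X$.

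Set $\widetilde A:=A+N_{\overline{\dom B}}$, which is of type (FPV) by hypothesis. For the Fitzpatrick inequality I split on whether $z\in\overline{\dom B}$. \emph{Case~1 ($z\in\overline{\dom B}$).} Applying Fact~\ref{FProCVS} to the pair $(\widetilde A,B)$ — legitimate since $x_0\in\dom\widetilde A\cap\inte\dom B$ and $\widetilde A$ is FPV — yields $F_{\widetilde A+B}(z,z^*)\geq\scal{z}{z^*}$. The remaining comparison $F_{\widetilde A+B}\leq F_{A+B}$ on $\overline{\dom B}\times X^*$ follows from decomposing any $a^*\in(\widetilde A+B)(a)$ as $\alpha+\nu+\beta$ with $\alpha\in Aa$, $\nu\in N_{\overline{\dom B}}(a)$, $\beta\in Ba$, and invoking $\scal{z-a}{\nu}\leq 0$ (since $z\in\overline{\dom B}$), so that the Fitzpatrick term at $(a,a^*)$ is dominated by the term at $(a,\alpha+\beta)\in\gra(A+B)$.

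\emph{Case~2 ($z\notin\overline{\dom B}$).} I aim to show $F_{A+B}(z,z^*)=+\infty$ by replaying Proposition~\ref{ProCVS:P1}'s proof with $\widetilde A$ in place of the FPV operator $A$. Since $\overline{\dom\widetilde A}\subseteq\overline{\dom B}$ forces $z\notin\overline{\dom\widetilde A}$, Fact~\ref{LeWExc:3} applied to $\widetilde A$ supplies a sequence $(a_n,a_n^*)\in\gra\widetilde A$ with $a_n\to y_\infty\in[x_0,z[$ and $\scal{z-a_n}{a_n^*}\to+\infty$. After extracting, where possible, a subsequence with $a_n\in\dom A\cap(\overline{\dom B}\setminus\dom B)$, I construct the open balls $H_n\subseteq\inte\dom B$ of Proposition~\ref{ProCVS:P1} — on which $\|B\|$ is forced to be large via Fact~\ref{FCTV:1} — and use the FPV property of $\widetilde A$ to produce $(\widetilde{a_n},\widetilde{a_n}^*)\in\gra\widetilde A\cap(H_n\times X^*)$ satisfying the key estimate of that proposition. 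The decisive observation — and the reason the weakened FPV hypothesis on $\widetilde A$ (rather than on $A$) suffices — is that $\widetilde{a_n}\in H_n\subseteq\inte\dom B$ forces $N_{\overline{\dom B}}(\widetilde{a_n})=\{0\}$, so in fact $(\widetilde{a_n},\widetilde{a_n}^*)\in\gra A$; Fact~\ref{FCTV:2} applied with the \emph{original} $A$ (not $\widetilde A$) then delivers $F_{A+B}(z,z^*)=+\infty$.

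The main obstacle is covering Case~2 exhaustively, since Fact~\ref{LeWExc:3} does not a priori produce a subsequence in $\dom A\cap(\overline{\dom B}\setminus\dom B)$: one must branch into three situations. If infinitely many $a_n$ lie in $\overline{\dom B}\setminus\dom B$, the construction above applies directly; if $(a_n)$ eventually enters $\inte\dom B$, then the normal-cone component $\nu_n$ vanishes, $(a_n,a_n^*)\in\gra A\cap(\dom B\times X^*)$, and Fact~\ref{LeWExc:1} applied directly to $A$ closes the case; the delicate sub-case — when $(a_n)$ approaches the exit point of $[x_0,z]$ from $\overline{\dom B}$ along $\dom B\cap\partial\overline{\dom B}$ — requires running the $H_n$-perturbation combined with the FPV property of $\widetilde A$ applied to $(v_n,(2-t_n)a_n^*)$ so as to transfer the (potentially normal-cone-carried) blowup of $\scal{z-a_n}{a_n^*}$ onto a nearby $A$-witness inside $\inte\dom B$, at which point Fact~\ref{FCTV:2} applies as before.
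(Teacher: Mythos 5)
Your overall strategy is the paper's: verify the hypotheses of Corollary~\ref{VoiSimn:1}, pass to $\widetilde A:=A+N_{\overline{\dom B}}$, use Fact~\ref{FProCVS} when $z\in\overline{\dom B}$, and otherwise feed the sequence from Fact~\ref{LeWExc:3} into the machinery of Proposition~\ref{ProCVS:P1}. (Your Case~1 comparison $F_{\widetilde A+B}\leq F_{A+B}$ is more work than needed: since $B$ is maximally monotone, $B+N_{\overline{\dom B}}=B$, so $\widetilde A+B=A+B$ as operators and the two Fitzpatrick functions coincide everywhere. For the same reason your ``decisive observation'' in Case~2 is superfluous — Proposition~\ref{ProCVS:P1} and Facts~\ref{LeWExc:1}, \ref{FCTV:2} are stated for a general FPV, respectively monotone, operator, so they may be applied verbatim to $\widetilde A$ and the conclusion transferred via $F_{\widetilde A+B}=F_{A+B}$.)

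There is, however, a genuine gap in your third sub-case of Case~2, where infinitely many $a_n$ lie in $\dom B\cap\bd\,\overline{\dom B}$. You propose to run the $H_n$-perturbation there and finish with Fact~\ref{FCTV:2}. But Fact~\ref{FCTV:2} requires a sequence $b_n^*\in B\widetilde{a_n}$ with $\|b_n^*\|\to+\infty$ and $K_n/\|b_n^*\|\to 0$, where $K_n$ is of the order of $-\|a_n^*\|$; in the proof of Proposition~\ref{ProCVS:P1} this is secured by choosing $t_n$ so close to $1$ that $\inf\|B(H_n)\|\geq 4K_0^2(\|a_n^*\|+1)n$, and that lower bound comes from Fact~\ref{FCTV:1}, which is applicable only because $a_n\in\overline{\dom B}\setminus\dom B$. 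When $a_n\in\dom B$ there is no reason for $\|B\|$ to blow up near $a_n$, so the hypotheses of Fact~\ref{FCTV:2} cannot be verified and this branch of your argument does not close. The repair is exactly the paper's Case~1 and makes the sub-case trivial rather than ``delicate'': whenever a subsequence of $(a_n)$ lies in $\dom B$ (boundary or not), the pairs $(a_n,a_n^*)$ belong to $\gra\widetilde A\cap(\dom B\times X^*)$, $\widetilde A$ is monotone, $a_n\to\delta z\in[x_0,z[$ and $\scal{z-a_n}{a_n^*}\to+\infty$, so Fact~\ref{LeWExc:1} applied to the pair $(\widetilde A,B)$ gives $F_{A+B}(z,z^*)=F_{\widetilde A+B}(z,z^*)=+\infty$ directly; there is no need to strip off the normal-cone component or to perturb into $\inte\dom B$. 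With that substitution your Case~2 reduces to the paper's two sub-cases and the proof is complete.
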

\begin{proof}
After translating the graphs if necessary, we can and do assume that
$0\in\dom A\cap\inte\dom B$ and that $(0,0)\in\gra A\cap\gra B$.
By Corollary~\ref{VoiSimn:1}, it suffices to show that
\begin{equation} \label{EOL:1}
F_{A+ B}(z,z^*)\geq \langle z,z^*\rangle,\quad \forall(z,z^*)\in X\times X^*.
\end{equation}
Take $(z,z^*)\in X\times X^*$.
Suppose to the contrary that
\begin{align}
F_{A+B}(z,z^*)<\langle z,z^*\rangle.\label{FPCoS:e20}
\end{align}

Since $B$ is maximally monotone, $B=B+N_{\overline{\dom B}}$.  Thus
\begin{align}
A+B= A+B+N_{\overline{\dom B}}= (A+N_{\overline{\dom B}})+B.\label{TePGVL:e1}
\end{align}
Since $A+N_{\overline{\dom B}}$ is of type (FPV) and $0\in\dom
\left[A+N_{\overline{\dom B}}\right]\cap\inte\dom B$,
Fact~\ref{FProCVS} and \eqref{FPCoS:e20} imply that
\begin{align}
z\notin\overline{\dom B}\quad\text{and then}\quad
z\notin\overline{\dom \left[A+N_{\overline{\dom B}}\right]}
\label{TePGVL:e2}
\end{align}
Then by Fact~\ref{LeWExc:3},
there exist a sequence
$(a_n, a^*_n)_{n\in\NN}$ in $\gra (A+N_{\overline{\dom B}})$
and $\delta\in\left[0,1\right[$ such that
\begin{align}
a_n\longrightarrow \delta z\quad\text{and}\quad
\langle z-a_n, a^*_n\rangle\longrightarrow+\infty.\label{TePGVL:e3}
\end{align}
Thus $a_n\in\dom \left[A+N_{\overline{\dom B}}\right]\cap\overline{\dom B},\, \forall n\in\NN$.

Now we consider two cases.

\emph{Case 1}:  There exists a subsequence of $(a_n)_{n\in\NN}$ in $\dom B$.

We can and do suppose that $a_n\in\dom B$ for every $n\in\NN$. Thus $(a_n)_{n\in\NN}$
is in  $ \dom \left[A+N_{\overline{\dom B}}\right]\cap\dom B$.

Combining Fact~\ref{LeWExc:1} and \eqref{TePGVL:e3},
\begin{align*}F_{A+B}(z,z^*)=F_{A+N_{\overline{\dom B}}+B}(z,z^*)
=+\infty,
\end{align*}
which contradicts \eqref{FPCoS:e20}.

\emph{Case 2}:  There exists $N_1\in\NN$ such that $a_n\notin\dom B,\, \forall n\geq N_1$.

Then we can and do suppose that $a_n\notin\dom B$ for every $n\in\NN$.
Thus, $a_n\in\dom \left[A+N_{\overline{\dom B}}\right]\cap\left[\overline{\dom B}\backslash\dom B\right]$.
By Proposition~\ref{ProCVS:P1} and \eqref{TePGVL:e3}, \begin{align*}F_{A+B}(z,z^*)=F_{A+N_{\overline{\dom B}}+B}(z,z^*)\geq\langle z,z^*\rangle,
\end{align*}
which contradicts \eqref{FPCoS:e20}.

Combing all the above cases, we have $F_{A+B}(z,z^*)\geq\langle z, z^*\rangle$ for all $(z,z^*)\in X\times X^*$.
Hence $A+B$ is  maximally monotone.
\end{proof}

\begin{remark}Theorem~\ref{TePGV:1} generalizes the main result in
\cite{Yao3} (see \cite[Theorem~3.4]{Yao3}).
\end{remark}

\begin{corollary}\label{CorPbA:1}
Let $f:X\rightarrow \RX$ be a proper lower semicontinuous convex function,
and let $B:X\rightrightarrows X^*$ be maximally monotone with $\dom \partial f\cap\inte\dom B\neq\varnothing$.
 Then
$\partial f +B$ is maximally monotone.
\end{corollary}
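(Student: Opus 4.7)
The plan is to apply Theorem~\ref{TePGV:1} directly with $A := \partial f$. Three hypotheses must be checked: maximal monotonicity of $A$, the constraint qualification $\dom A\cap\inte\dom B\neq\varnothing$ (which is given), and that $A+N_{\overline{\dom B}}$ is of type (FPV). Maximal monotonicity of $\partial f$ is Rockafellar's classical subdifferential theorem, so everything reduces to verifying the (FPV) property of the perturbed operator $\partial f+N_{\overline{\dom B}}$.

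The key observation is that this perturbed sum is itself a subdifferential, to which Fact~\ref{f:referee0d} can be applied. Since $\inte\dom B\neq\varnothing$, Fact~\ref{f:refer02c} tells us that $\overline{\dom B}$ is convex and $\inte\overline{\dom B}=\inte\dom B$, so the indicator function $\iota_{\overline{\dom B}}$ is proper, lower semicontinuous, convex, and continuous on $\inte\dom B$. Pick any $x_0\in\dom\partial f\cap\inte\dom B\subseteq \dom f\cap\dom\iota_{\overline{\dom B}}$; Fact~\ref{f:F4} then yields
\[
\partial\bigl(f+\iota_{\overline{\dom B}}\bigr)=\partial f+\partial\iota_{\overline{\dom B}}=\partial f+N_{\overline{\dom B}}.
\]
Setting $g:=f+\iota_{\overline{\dom B}}$, the function $g$ is proper (witnessed by $x_0$), lower semicontinuous, and convex, so Fact~\ref{f:referee0d} gives that $\partial g=\partial f+N_{\overline{\dom B}}$ is of type (FPV).

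With all three hypotheses of Theorem~\ref{TePGV:1} verified, maximal monotonicity of $\partial f+B$ follows immediately. There is no real obstacle here: the proof is a short bookkeeping argument, and the only nontrivial ingredient is the recognition that $\partial f+N_{\overline{\dom B}}$ is itself a subdifferential via Fact~\ref{f:F4}, which in turn requires Fact~\ref{f:refer02c} to supply a point where $\iota_{\overline{\dom B}}$ is continuous.
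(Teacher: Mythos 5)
Your proposal is correct and follows essentially the same route as the paper: identify $\partial f+N_{\overline{\dom B}}$ with $\partial(f+\iota_{\overline{\dom B}})$ via Fact~\ref{f:refer02c} and Fact~\ref{f:F4}, invoke Fact~\ref{f:referee0d} to get type (FPV), and then apply Theorem~\ref{TePGV:1}. The only difference is that you spell out the continuity point needed for the sum rule, which the paper leaves implicit.
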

\begin{proof}
By Fact~\ref{f:refer02c} and Fact~\ref{f:F4} (or \cite[Theorem~1.1]{AtBrezis}), $\partial f+N_{\overline{\dom B}}
=\partial (f+\iota_{\overline{\dom B}})$.
Then Fact~\ref{f:referee0d} shows that $\partial f+N_{\overline{\dom B}}$ is
of type (FPV).  Applying Theorem~\ref{TePGV:1},
we have $\partial f +B$ is maximally monotone.
\end{proof}

\begin{remark}
Corollary~\ref{CorPbA:1} provides an affirmative answer to a problem posed by Borwein and Yao in \cite[Open problem~4.5]{BY3}.
\end{remark}

Given a set-valued operator $A:X\rightrightarrows X^*$,
we say $A$ is a \emph{linear relation} if $\gra A$ is a
linear subspace.
\begin{corollary}[Linear relation]\emph{(See \cite[Theorem~3.1]{BY3} or \cite[Corollary~4.5]{BY4FV}.)}
Let $A:X\To X^*$ be a maximally monotone linear relation, and let
$B: X\rightrightarrows X^*$ be maximally monotone. Suppose  that
$\dom A\cap\inte\dom B\neq\varnothing$.  Then $A+B$ is maximally
monotone.
\end{corollary}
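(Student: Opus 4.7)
The plan is to derive this corollary directly from Theorem~\ref{TePGV:1}. Since the CQ $\dom A\cap\inte\dom B\neq\varnothing$ is already given, the only remaining hypothesis to verify is that $A+N_{\overline{\dom B}}$ is of type (FPV). Setting $C:=\overline{\dom B}$ and invoking Fact~\ref{f:refer02c} (which applies because $\inte\dom B\neq\varnothing$), we see that $C$ is a closed convex set with $\inte C=\inte\dom B$, so $\dom A\cap\inte C\neq\varnothing$. The task thus reduces to the following general claim: \emph{if $A$ is a maximally monotone linear relation and $C\subseteq X$ is closed convex with $\dom A\cap\inte C\neq\varnothing$, then $A+N_C$ is of type (FPV).}

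For the central step I would appeal to Fact~\ref{f:refer02a}: it suffices to show that for every closed convex $D\subseteq X$ with $\dom(A+N_C)\cap\inte D\neq\varnothing$, the operator $A+N_C+N_D$ is maximally monotone. The hypothesis on $D$ furnishes a point in $C\cap\inte D$, so Fact~\ref{f:F4} applied to the indicators $\iota_C,\iota_D$ yields $N_C+N_D=N_{C\cap D}$, and hence $A+N_C+N_D=A+N_{C\cap D}$. Since $\dom A$ is a linear subspace (hence convex), combining the given point in $\dom A\cap\inte C$ with the point in $\dom A\cap C\cap\inte D$ via a standard accessibility/line-segment argument produces a point in $\dom A\cap\inte C\cap\inte D\subseteq\dom A\cap\inte(C\cap D)$; Rockafellar's CQ for $A+N_{C\cap D}$ is therefore satisfied.

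The main obstacle is then the remaining input: the sum of a maximally monotone linear relation and a normal cone operator is maximally monotone under Rockafellar's CQ in an arbitrary Banach space. This is the nontrivial analytic ingredient, established in \cite{BY3,BY4FV} via Fitzpatrick-function techniques specific to linear relations; once granted, $A+N_{C\cap D}$ is maximally monotone, which by Fact~\ref{f:refer02a} makes $A+N_C$ of type (FPV), and Theorem~\ref{TePGV:1} then delivers the corollary.
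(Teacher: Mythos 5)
Your overall strategy is exactly the paper's: reduce the corollary to checking that $A+N_{\overline{\dom B}}$ is of type (FPV) (using Fact~\ref{f:refer02c} to see that $\overline{\dom B}$ is closed convex with $\inte\overline{\dom B}=\inte\dom B$) and then invoke Theorem~\ref{TePGV:1}. Where you diverge is in how the (FPV) property is certified. The paper simply cites \cite[Corollary~3.3]{Yao2}, which already asserts that the sum of a maximally monotone linear relation and the subdifferential of a proper lower semicontinuous convex function (in particular $N_C$) is of type (FPV) under Rockafellar's constraint qualification. You instead rederive this via Fact~\ref{f:refer02a}: you reduce to the maximality of $A+N_{C\cap D}$, using Fact~\ref{f:F4} to get $N_C+N_D=N_{C\cap D}$ and the accessibility/line-segment argument (valid since $\dom A$ is a linear subspace) to verify the constraint qualification for $C\cap D$, and then feed in the known result that a maximally monotone linear relation plus a normal cone operator is maximally monotone under the interiority condition. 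Your route is more self-contained in spirit and relies on a strictly weaker external ingredient (linear relation $+$ normal cone, from \cite{BWY4}/\cite{BY3}, rather than linear relation $+$ general subdifferential being (FPV)); the paper's route is shorter. Two small points to tidy: Fact~\ref{f:refer02a} presupposes that the operator it is applied to, here $A+N_C$, is itself maximally monotone, so you should state explicitly that this follows from the same external input (e.g.\ the case $D=X$); and the definition of type (FPV) only makes sense once that maximality is in hand.
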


\begin{proof}
Apply Fact~\ref{f:refer02c}, \cite[Corollary~3.3]{Yao2} and Theorem~\ref{TePGV:1} directly.
\end{proof}

\begin{corollary}[Convex domain]\emph{(See \cite[Corollary~4.3]{BY4FV}.)}
Let $A:X\To X^*$ be of type (FPV) with convex domain, and let
$B: X\rightrightarrows X^*$ be maximally monotone. Suppose  that
$\dom A\cap\inte\dom B\neq\varnothing$.  Then $A+B$ is maximally
monotone.
\end{corollary}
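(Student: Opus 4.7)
The plan is to reduce the conclusion directly to Theorem~\ref{TePGV:1}: once we verify that $A+N_{\overline{\dom B}}$ is of type (FPV), maximal monotonicity of $A+B$ follows at once under the already-assumed constraint qualification $\dom A\cap\inte\dom B\neq\varnothing$.

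First I would record the setup. By Fact~\ref{f:refer02c} applied to $B$, the set $\overline{\dom B}$ is closed and convex with $\inte\overline{\dom B}=\inte\dom B$, so the hypothesis yields $\dom A\cap\inte\overline{\dom B}\neq\varnothing$. Thus $N_{\overline{\dom B}}$ is a maximally monotone operator whose domain-interior meets $\dom A$, and $A+N_{\overline{\dom B}}$ is a well-defined monotone operator with $\dom(A+N_{\overline{\dom B}})=\dom A\cap\overline{\dom B}\neq\varnothing$.

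The only substantive step is to establish that $A+N_{\overline{\dom B}}$ is of type (FPV), and this is exactly where the convexity of $\dom A$ enters. Since $A$ is of type (FPV) with convex domain and $\overline{\dom B}$ is a closed convex set satisfying the transversality condition above, one may invoke the convex-domain result of \cite{BY4FV} (the preparatory step for \cite[Corollary~4.3]{BY4FV}) that addition of the normal cone of a closed convex set $C$ with $\dom A\cap\inte C\neq\varnothing$ preserves the FPV property when $\dom A$ is convex. Taking $C=\overline{\dom B}$ gives that $A+N_{\overline{\dom B}}$ is of type (FPV). Alternatively, one argues via Fact~\ref{f:refer02a}: given any closed convex $C'$ with $\dom(A+N_{\overline{\dom B}})\cap\inte C'\neq\varnothing$, apply Fact~\ref{f:F4} to obtain $N_{\overline{\dom B}}+N_{C'}=N_{\overline{\dom B}\cap C'}$, and then appeal to the convex-domain maximal-monotonicity result to conclude $A+N_{\overline{\dom B}}+N_{C'}$ is maximally monotone, whence Fact~\ref{f:refer02a} yields the FPV property of $A+N_{\overline{\dom B}}$.

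With $A+N_{\overline{\dom B}}$ of type (FPV) and the constraint qualification in force, Theorem~\ref{TePGV:1} immediately delivers the maximal monotonicity of $A+B$. The main obstacle, and the reason the convex-domain hypothesis is imposed, is the middle step: transferring the FPV property from $A$ to $A+N_{\overline{\dom B}}$. If one refuses to cite \cite{BY4FV}, the direct verification proceeds along the same lines as Proposition~\ref{ProCVS:P1}, but the convexity of $\dom A$ is precisely what rules out the pathological limit points that would otherwise obstruct the FPV argument.
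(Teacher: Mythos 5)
Your proof is correct and follows essentially the same route as the paper: the paper's own proof is a one-line application of Fact~\ref{f:refer02c}, the external convex-domain FPV-preservation result (cited there as \cite[Corollary~2.10]{Yao3}, the counterpart of the preparatory step in \cite{BY4FV} you invoke) to get that $A+N_{\overline{\dom B}}$ is of type (FPV), and then Theorem~\ref{TePGV:1}. Your primary argument matches this exactly; only your ``alternative'' sketch via Fact~\ref{f:refer02a} is shakier (the interiority needed to conclude maximality of $A+N_{\overline{\dom B}\cap C'}$ is not checked), but it is not needed.
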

\begin{proof}
Apply Fact~\ref{f:refer02c}, \cite[Corollary~2.10]{Yao3} and Theorem~\ref{TePGV:1} directly.
\end{proof}

Applying Fact~\ref{f:refer02a} and Theorem~\ref{TePGV:1}, we can obtain that
   the sum problem is equivalent to the following problem:

\begin{problem}\label{OPRKM:1}
Let $A:X\To X^*$ be  maximally monotone, and $C$ be a nonempty closed and convex subset of $X$.
Assume that $\dom A\cap\inte C\neq\varnothing$. Is
$A+N_C$  necessarily maximally monotone?
\end{problem}
Clearly,   Problem~\ref{OPRKM:1} is a special case of the sum
problem.
However, if we would have an affirmative answer to Problem~\ref{OPRKM:1} for every maximally monotone operator $A$ and
every nonempty closed and convex set $C$ satisfying Rockafellar's constraint qualification:
$\dom A\cap\inte C\neq\varnothing$. Then Fact~\ref{f:refer02a} implies that
$A$ is of type (FPV), and then $\overline{\dom A}$ is convex by Fact~\ref{SDMn:pv}. Applying Fact~\ref{f:refer02a} again and using the technique similar to the proof of
\cite[Corollary~4.6]{BY4FV} (or \cite[Corollary~2.10]{Yao3}), we can obtain that
$A+N_C$ is of type (FPV).  Thus applying Theorem~\ref{TePGV:1}, we have
an affirmative answer to the sum problem.

\section*{Acknowledgment}
The author thanks Dr.~Anthony Lau for supporting his visit and
providing excellent working conditions.

\end{document}